\title{Relative entropy and contraction for  extremal shocks of Conservation Laws up to a shift}
\author{Alexis F. Vasseur \\ The University of Texas at Austin}
\newlength{\hchng}
\newlength{\vchng}
\newtheorem{theo}{Theorem}[section]
\newtheorem{prop}{Proposition}
\newtheorem{lemm}{Lemma}
\newtheorem{defi}{Definition}
\newcommand{\ds}{\displaystyle}
\newcommand{\R}{\mathbb R}
\newcommand{\eps}{\varepsilon}
\newcommand{\dt}{\partial_t}
\newcommand{\dx}{\partial_x}
\newcommand{\mV}{\mathcal{V}}
\newcommand{\mU}{\mathcal{U}}
\newcommand{\Vs}{V_{\mathrm{max}}}
\newcommand{\Vi}{V_{\mathrm{min}}}
\newcommand{\sU}{S}
\newcommand{\E}{\mathcal{E}}
\newcommand{\Oa}{\mathcal{O}_a}
\newcommand{\midd}{\thinspace\vert\thinspace}
\DeclareMathOperator*{\esssup}{ess\,sup}
\begin{document}
\maketitle
\bibliographystyle{plain}
\noindent{\bf Abstract:}  
We consider systems of conservation laws endowed with a convex entropy.
We show the contraction, up to a translation, to extremal entropic shocks, for a pseudo-distance based on the notion of relative entropy.
The contraction holds for bounded entropic weak solutions having an additional trace property. 
The pseudo-distance depends only on the fixed extremal entropic shocks in play.
 In particular, it can be chosen uniformly for any  entropic weak solutions which are compared to a fixed shock. The boundedness of the solutions  controls the strength of the shift needed to get the contraction.   However,  no $BV$ estimate is needed on the weak solutions considered. The theory holds without smallness condition.  For fluid mechanics, the theory handles solutions with vacuum.   

\vskip0.3cm \noindent {\bf Keywords:}
System of conservation laws, contraction, compressible Euler equation, Rankine-Hugoniot discontinuity, shock, contact discontinuity, relative entropy, stability, uniqueness. 

\vskip0.3cm \noindent {\bf Mathematics Subject Classification:}
35L65, 35L67, 35B35.

\section{Introduction}
In this article, we provide new developments on the theory,  introduced in \cite{LV}, of $L^2$ stability of extremal shocks for systems of conservation laws, based on the relative entropy. 
\vskip0.5cm
 The theory of Kruzhkov shows that  solutions to scalar conservation laws provide a contraction in $L^1$. This result is not true for the $L^2$ norm, even when considering the distance to a fixed shock. Leger showed in \cite{Leger} that, however,  the contraction to a fixed shock is true up to a drift. Consider a scalar conservation law
 $$
 \dt u+\dx f(u)=0
 $$
 with a strictly convex flux $f$, and a fixed entropic shock  $(U_L,U_R, \sigma)$. We recall that a shock is a special entropic solution
  $S(t,x)$ equal to $U_L$ for $x<\sigma t$ and equal to $U_R$ for $x>\sigma t$. 
   Leger showed that for every entropic bounded solution $u$ of the scalar conservation law, there exists a Lipschitz shift
 $t\to X(t)$ such that 
 $$
 \int_{\R}|u(t,x)-S(t,x-X(t))|^2\,dx
 $$
 is  non increasing in time. Note that this property is valid for any relative entropy pseudo-distance defined from a strictly convex entropy (see section \ref{section_presentation} for the definition of the relative entropy). 
 \vskip0.5cm
 A thorough study of this kind of contraction property for system of conservation laws can be found in 
 \cite{SV}. Although the  contraction does not hold for most  systems, it has been showed  in \cite{LV} that the relative entropy can be used to show a result of strong stability of extremal shocks and contact discontinuities, for 
 the $L^2$ distance, up to a shift.   
 \vskip0.5cm
In this article, we focus on extremal shocks verifying the Liu condition. For any such shock, we construct a pseudo-distance, still based on the relative entropy, but not anymore homogenous in $x$. This pseudo-distance  induces a contraction, up to a drift, to this shock. Moreover, we show that the pseudo-distance  does not depend on any quantitative property of the weak solutions (not even their $L^\infty$ norm). Only the drift does. 
\vskip0.5cm
The contraction is shown for a wide class of systems of conservation laws endowed with a convex entropy, and for bounded weak entropic solutions verifying the following trace property (see \cite{LV}).
\begin{defi}\label{defi_trace}
Let $U\in L^\infty(\R^+\times\R)$. We say that $U$ verifies the strong trace property if for any Lipschitzian curve $t\to X(t)$, there exists two bounded functions $U_-,U_+\in L^\infty(\R^+)$ such that for any $T>0$
$$
\lim_{n\to\infty}\int_0^T\sup_{y\in(0,1/n)}|U(t,x(t)+y)-U_+(t)|\,dt=\lim_{n\to\infty}\int_0^T\sup_{y\in(-1/n,0)}|U(t,x(t)+y)-U_-(t)|\,dt=0.
$$
\end{defi}
Obviously, any $BV$ function verifies this strong trace property.
But this requirement is weaker than the $BV$ property.
Let us emphasize that this notion of trace is more restrictive than the strong trace introduced in \cite{Vasseur_trace}, which is 
 known to be verified for bounded solutions of scalar conservation laws. 
This has been shown in the multidimensional case, first with a non-degeneracy property, in \cite{Vasseur_trace}. In the one-dimensional case, a different proof based on compensated compactness was proposed by  Chen and Rascle \cite{Chen_trace}.
For a general flux function the strong trace problem has been solved in the 1D case in \cite{KV}. The general multidimensional case has been obtained by Panov \cite{Panov,Panov2} (see also Kwon \cite{Kwon},  De Lellis,  Otto,  and Westdickenberg \cite{DeLellis} for interesting generalizations). In the case of systems, this is mainly an open problem. This has been shown only for the particular case of isentropic gas dynamics with $\gamma=3$ for traces in time (traces in space can be shown the same way) in \cite{Vasseur_gamma3}. 
Unfortunately, there are no such results for the strong  trace property of Definition \ref{defi_trace} outside of the usual $BV$ theory.
\vskip0.3cm
Stability of shocks in the class of $BV$ solutions has been investigated by a number of authors. In the case of small perturbations in $L^\infty\cap BV$, Bressan, Crasta, and Piccoli \cite{Bressan1} developed a powerful theory of $L^1$ stability for entropy solutions obtained by either the Glimm scheme \cite{Glimm} or the wave front-tracking method.  A simplified approach has been proposed by Bressan, Liu, and Yang \cite{Bressan2} and Liu and Yang \cite{liu}. (See also Bressan \cite{Bressan}.) The theory also works in some cases for small perturbations in $L^\infty\cap BV$ of large shocks. See, for instance, Lewicka and Trivisa \cite{Lewicka} or Bressan and Colombo \cite{Bressan3}.  
 \vskip0.3cm
However, our contraction result goes beyond the known results valid  in the class of $BV$ solutions, with perturbations small in $BV$. 
Our approach is based on the relative entropy method first used by Dafermos and DiPerna to show $L^2$ stability and uniqueness of Lipschitzian solutions to conservation laws
\cite{Dafermos4,Dafermos1,DiPerna}. Note that in \cite{DiPerna}, uniqueness of small shocks for strictly hyperbolic $2\times 2$ systems is shown in a class of admissible weak solutions with small oscillation in $L^\infty \cap BV$. The analysis in \cite{DiPerna} also implies the uniqueness of shocks for $2 \times 2$ systems in the Smoller-Johnson class \cite{Smoller}. In each case genuine nonlinearity is assumed. The ideas of DiPerna were developed further by Chen and Frid in the papers \cite{Frid2,Frid1}. In subsequent work, they established, together with Li \cite{Chen1}, 
the uniqueness of solutions to the Riemann problem in a large class of entropy solutions (locally $BV$ without smallness conditions) for the $3\times 3$ Euler system in Lagrangian coordinates. They also establish a large-time stability result in this context. See also Chen and Li \cite{Chen_Li} for an extension to the relativistic Euler equations. However, no stability in $L^2$ for all time is included in those results. 
\vskip0.3cm 
Our approach is based on fairly mild assumptions on the system and the Rankine-Hugoniot discontinuity (see \cite{LV}).
Basically, we need the discontinuity to be extremal (1-shock or n-shock and well separated from the other Hugoniot discontinuities), and verify the Liu condition.
We need also a property of growth of the strength of the shock along the shock curve, where the strength is measured via the relative entropy. Texier and Zumbrun showed in \cite{TZ} that the Lopatinski conditions are verified under our hypotheses. Barker, Freist\"uhler and Zumbrun constructed in \cite{BFZ}, for the Euler equation, some pressure laws for which the stability does not hold. Those pressures verify most of our assumptions. The only difference is that the increase of the strength  of the shock, along the shock curve, is measured by the entropy instead of the relative entropy.

Very little constraint is needed on the other shock families. Lax properties are typically enough. But we may even relax it
to cases where the system is neither genuinely nonlinear nor strictly hyperbolic, and even to cases where the shock curves are not well-defined. The theory works fine even for large shocks.  
\vskip0.5cm
The relative entropy method is also an important tool in the study of asymptotic limits to conservation laws. Applications of the relative entropy method in this context began with the work of Yau \cite{Yau} and have been studied by many others. For incompressible limits, see Bardos, Golse, Levermore \cite{Bardos_Levermore_Golse1,Bardos_Levermore_Golse2}, Lions and  Masmoudi \cite{Lions_Masmoudi}, Saint Raymond et al. \cite{SaintRaymond1,SaintRaymond2,SaintRaymond3,SaintRaymond4}. For the compressible limit, see Tzavaras \cite{Tzavaras_theory} in the context of relaxation 
and \cite{BV,BTV,MV} in the context of hydrodynamical limits. In all those papers, the method works as long as the limit solution is Lipschitz.  This is because the method is based on the strong stability of such solutions to the limit system of conservation laws: initial $ \eps$ perturbations lead to $C_T\eps$ perturbations at time $t<T$.  Roughly speaking, the convergence in driven by the stability of the limit function. This paper is part of a general program initiated in \cite{Vasseur_Book}  to apply this kind of method to shocks. 
It is well known that shocks are  not strongly $L^2$ stable as above (an initial $ \eps$ perturbation can lead to a $\sqrt{ \eps}$ perturbation in finite time).  However, this paper shows that extremal shocks are strongly $L^2$ stable, up to a suitable drift.   A  first application of the method  to the study of asymptotic limit to a shock can be found in \cite{KV} in the scalar case. 
\vskip0.3cm
The key idea of the proof is to find the proper way to construct the shift on the fly which induces the contraction. A similar construction was performed in 
 \cite{Vasseur_shock},  for the study of a semi-discrete shock for an isentropic gas with $\gamma=3$.
\vskip0.5cm

We will give a precise description of our hypotheses and main results in the next section. First let us mention a few particular cases in which our theory applies. Our first examples include the isentropic Euler system and the full Euler system for a polytropic gas. Both systems are treated in Eulerian coordinates. The isentropic Euler system is the following.
\begin{equation}\label{isentropic}
\left\{\begin{array}{l}
\ds{\dt \rho+\dx (\rho u)=0}\\[0.2cm]
\ds{\dt (\rho u )+\dx (\rho u^2+ P(\rho))=0}.
\end{array}\right.
\end{equation}
We assume a smooth pressure law $P: \R^+ \to \R$ with the following properties
\begin{align}\label{isentropic2}
P^{\prime}(\rho)>0, \qquad \quad [\rho P(\rho)]^{\prime \prime} \geq 0.
\end{align}
As usual, we consider only entropic solutions of this system, namely, those verifying additionally the entropy inequality:
$$
\dt \eta(\rho,\rho u)+\dx G(\rho, \rho u) \leq 0,
$$
with
 $$
 \eta(\rho, \rho u)=\frac{(\rho u)^2}{2 \rho} + S(\rho), \qquad G(\rho, \rho u) = \frac{(\rho u)^3}{2 \rho^2} + \rho u \thinspace S^{\prime}(\rho),
 $$
and with $S^{\prime \prime}(\rho) = \rho^{-1} P^{\prime}(\rho) > 0$. Note that we need only a single convex entropy, even if in this case there exists an entire family of convex entropies. 
The relative entropy defining a pseudo norm is given by
$$
\eta(\rho,\rho u| \bar{\rho}\bar u)=\rho \frac{|u-\bar u|^2}{2}+S(\rho|\bar\rho)\geq 0,
$$
where $S(\rho|\bar\rho)=S(\rho)-S(\bar\rho)-S'(\bar\rho)(\rho-\bar\rho)$.
For $P(\rho)=\rho^2$ (the shallow water equation), we have $S(\rho|\bar\rho)=|\rho-\bar\rho|^2$.
\vskip0.3cm

The full Euler system reads
\begin{equation}\label{Euler}
\left\{\begin{array}{l}
\ds{\dt \rho+\dx (\rho u)=0}\\[0.3cm]
\ds{\dt (\rho u )+\dx (\rho u^2+ P)=0}\\[0.3cm]
\ds{\dt (\rho E )+\dx (\rho u E + uP)=0,}
\end{array}\right.
\end{equation}
where $E = \frac{1}{2}u^2 + e$. We describe here the case of a polytropic gas. Such a gas verifies the hypotheses stated in the next section which ensure the contraction property (see also \cite{LV}). Note that for more general cases, the hypotheses can be not verified. Counterexamples to the stability have been provided in such cases  by Barker, Freist\"uhler and Zumbrun in \cite{BFZ}. 
The equation of state for a polytropic gas is given by 
\begin{equation}\label{polytropic}
P = (\gamma-1) \rho e
\end{equation}
where $\gamma >1$. 
In that case, we consider the entropy/entropy-flux pair
\begin{equation}\label{Eulerconvex}
\eta(\rho, \rho u, \rho E) = (\gamma -1) \rho \ln \rho - \rho \ln e, \qquad G(\rho, \rho u, \rho E) = (\gamma -1) \rho u \ln \rho - \rho u \ln e,
\end{equation}
where, in conservative variables, we have $e = \ds{\frac{\rho E}{\rho} - \frac{(\rho u)^2}{2 \rho^2}}$. \\
In this case, the relative entropy is
$$
 \eta(\rho, \rho u, \rho E)= \rho \frac{|u-\bar u|^2}{2\bar\theta}+(\gamma-1)\rho\phi(\bar\rho|\rho)+\rho\phi(e|\bar e),
$$
where $\phi$ is the relative function associated to $\ln(1/x)$, $\phi(x|y)=\ln(y/x)+(1/y)(x-y)\geq0$.

For the Euler systems (\ref{isentropic}) and (\ref{Euler}), we have the following theorem.

\begin{theo}\label{theoEuler}
Consider a  shock $(U_L,U_R)=((\rho_L,u_L),(\rho_R,u_R))$ with velocity $\sigma$ associated  to the system (\ref{isentropic})-(\ref{isentropic2}), (resp. $(U_L,U_R)=((\rho_L,u_L,E_L),(\rho_R,u_R,E_R))$ associated  to the system (\ref{Euler})-(\ref{polytropic})). Then there exists a constant $a>0$ depending only on the shock with the following property.
\vskip0.3cm
Consider  any $K>0$.  There exists $C_K>0$ such that,
for any weak entropic solution $U=(\rho,u)\in L^\infty(\R^+ \times\R)$ of (\ref{isentropic}) (resp.  $U=(\rho, u,E)\in L^\infty(\R^+ \times \R)$ of (\ref{Euler})) 
verifying the strong trace property of Definition \ref{defi_trace} and such that $\|(\rho, u)\|_{L^\infty}\leq K$, (resp. $\|(\rho, u,E)\|_{L^\infty}\leq K$), 
 there exists a Lipschtiz path $x(t)$ such that for any $t>0$, the pseudo norm
$$
\int_{-\infty}^0\eta(U(t,x+x(t))| U_L)\,dx+a\int_0^\infty \eta(U(t,x+x(t)|U_R)\,dx
$$
is  non increasing in time. Moreover, for every $t>0$:
\begin{eqnarray*}
&&|x'(t)|\leq C_K,\\
&&|x(t)-\sigma t|\leq C_K\sqrt{t}\|U_0-S\|_{L^2(\R)},
\end{eqnarray*}
where $U_0=U(t=0)$, and $S(x)=U_L$ for $x<0$ and $S(x)=U_R$ for $x>0$.
\vskip0.3cm
Especially, we have for every $t>0$
$$
\|U(t,\cdot+x(t))- S\|_{L^2(\R)}\leq C_K \|U_0- S\|_{L^2(\R)}.$$
\vskip0.3cm
Finally, we can choose $a<1$ of a 1-shock, and $a>1$ for a n-shock (n=2 for the isentropic case, and n=3 for the full Euler).
\end{theo}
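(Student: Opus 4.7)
The plan is to obtain Theorem \ref{theoEuler} as a concrete application of the general contraction result that will be established in the remainder of the paper, so the core task is twofold: first, to check that the structural hypotheses of the abstract theorem are met by the isentropic system \eqref{isentropic}--\eqref{isentropic2} and the full polytropic Euler system \eqref{Euler}--\eqref{polytropic}; second, to extract the quantitative bounds on $x(t)$ and the $L^2$ estimate from the qualitative contraction property. I would begin by recalling the list of hypotheses (convexity of the entropy, strict hyperbolicity near the shock, Liu condition for an extremal $1$- or $n$-shock, relative-entropy growth along the shock curve). The condition $P'(\rho)>0,\ (\rho P(\rho))''\ge 0$ for the isentropic case, and the polytropic law \eqref{polytropic} for the full Euler case, were shown in \cite{LV} to imply exactly these structural assumptions; so this step reduces to a citation plus verification that the convex entropy $\eta$ defined in \eqref{Eulerconvex} indeed produces the claimed relative entropy formulas.

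Next, I would invoke the abstract theorem to produce simultaneously the weight $a$ and the shift $t\mapsto x(t)$. The weight $a$ comes from the ratio of ``downstream'' to ``upstream'' contributions in the relative-entropy dissipation: for a $1$-shock the fluid on the right is downstream and must be penalized less, giving $a<1$; for an $n$-shock the roles are reversed, giving $a>1$. The shift is defined on the fly as the absolutely continuous solution of an ODE
$$
x'(t)=V\bigl(U_-(t),U_+(t)\bigr),\qquad x(0)=0,
$$
where $U_\pm(t)$ are the strong traces (whose existence is granted by Definition~\ref{defi_trace}) of $U$ on either side of the moving interface, and $V$ is the velocity selector built in the abstract construction. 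Because the selector is a continuous function of its arguments on $\{|U|\le K\}$ and because $U_\pm\in L^\infty$ with norm at most $K$, one immediately has the Lipschitz bound $|x'(t)|\le C_K$. The non-increase of
$$
\mathcal{E}(t)=\int_{-\infty}^0\eta(U(t,x+x(t))|U_L)\,dx+a\int_{0}^{\infty}\eta(U(t,x+x(t))|U_R)\,dx
$$
then follows from the dissipation inequality the selector is engineered to produce.

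To deduce the two quantitative conclusions, I would use the fact that, on the bounded set $\{|U|\le K\}\cup\{U_L,U_R\}$, the relative entropy $\eta(\,\cdot\,|\,\cdot\,)$ is pointwise equivalent to $|U-\bar U|^2$ (away from vacuum this is classical; at the vacuum boundary one uses the explicit $\phi$ and $S(\rho|\bar\rho)$ formulas above, noting that $U_L,U_R$ are fixed and need not be at vacuum for an extremal Liu shock). Combined with $\mathcal{E}(t)\le\mathcal{E}(0)\lesssim\|U_0-S\|_{L^2}^2$, this yields
$$
\|U(t,\cdot+x(t))-S\|_{L^2(\R)}\le C_K\|U_0-S\|_{L^2(\R)}.
$$
For the drift estimate, I would observe that $V(U_L,U_R)=\sigma$ exactly by Rankine--Hugoniot, so a Lipschitz bound on $V$ in the ball of radius $K$ gives $|x'(t)-\sigma|\le C_K(|U_-(t)-U_L|+|U_+(t)-U_R|)$. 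Integrating in time and applying Cauchy--Schwarz in $t$ together with the $L^2$ control above yields
$$
|x(t)-\sigma t|\le C_K\sqrt{t}\,\|U_0-S\|_{L^2(\R)}.
$$

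The main obstacle I anticipate is the vacuum issue for the full Euler system: the equivalence between $\eta(\cdot|\bar U)$ and the squared Euclidean distance degenerates as $\rho\to 0$ or $e\to 0$, so controlling $|U_\pm(t)-U_{L,R}|$ by the dissipation functional requires delicate use of the $\phi$-functional and of the fact that the shock states $U_L,U_R$ are bounded away from vacuum. Equally delicate is the construction of the selector $V$ itself in a neighborhood of vacuum; the abstract theorem absorbs this into the dependence of $C_K$ on $K$, but translating that dependence faithfully into the bounds claimed here is what the specialized subsequent analysis will have to carry out carefully.
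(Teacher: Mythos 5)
Your overall strategy matches the paper's: Theorem \ref{theoEuler} is indeed proved by citing \cite{LV} for the verification of (H1)--(H3) and ($\text{H}^{\prime}$1)--($\text{H}^{\prime}$3) for the two Euler systems and then invoking the general Theorem \ref{main}; the Lipschitz bound on $x'$ and the $L^2$ conclusion via Lemma \ref{L2} are also as in the paper. However, your derivation of the drift estimate $|x(t)-\sigma t|\leq C_K\sqrt{t}\|U_0-S\|_{L^2}$ contains a genuine gap. First, the premise that $V(U_L,U_R)=\sigma$ ``by Rankine--Hugoniot'' is false for the selector actually constructed: $V$ is a function of a single state, and by design $V(U)\equiv v$ on $B(U_L,C_0)$, where $v$ is chosen \emph{strictly} between $\sigma$ and $\lambda_-(U_L)$; so $V(U_L)=v\neq\sigma$ and there is no pointwise inequality of the form $|x'(t)-\sigma|\leq C_K(|U_-(t)-U_L|+|U_+(t)-U_R|)$. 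Second, even granting such an inequality, your plan to close it by ``Cauchy--Schwarz in $t$ together with the $L^2$ control'' does not work: the quantities $|U_\pm(t)-U_{L,R}|$ are pointwise trace values at the moving interface, and nothing established (neither the contraction $\frac{d}{dt}E_a\leq 0$, which carries no quantitative dissipation rate, nor the spatial bound $\|U(t,\cdot+x(t))-S\|_{L^2}\leq C_K\|U_0-S\|_{L^2}$) controls $\int_0^t|U_\pm(s)-U_{L,R}|^2\,ds$.

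The paper obtains the drift bound by an entirely different argument: it tests the conservation law itself against an even cutoff $\phi$ equal to $1$ on $\{|x|\leq MT\}$ and supported in $\{|x|\leq 2MT\}$, with $M=\|x'\|_{L^\infty}$. The exact term $\int\phi(x)[S(x-x(t))-S(x)]\,dx-\int_0^t\int A(S(x-x(s)))\phi'(x)\,dx\,ds$ produces $(x(t)-\sigma t)(U_L-U_R)$, while the remaining terms are bounded by $C_K\sqrt{MT}\,\|U^0-S\|_{L^2}$ using $\|\phi\|_{L^2}\lesssim\sqrt{MT}$ and $\|\phi'\|_{L^\infty}\lesssim (MT)^{-1}$ together with the already-proved $L^2$ stability. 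Dividing by $|U_L-U_R|$ gives the claimed $\sqrt{t}$ bound. You would need to replace your step by this (or an equivalent) argument based on the weak formulation rather than on pointwise properties of the selector at the traces.
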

Note that the pseudo distance (as $a$) depends only on the considered shock $(U_L,U_R,\sigma)$.
Especially, it does not depend on any quantitative property of the weak solution $U$ (not even its $L^\infty$ norm). It shows that the profile of the shock (up to a drift) is extremely stable with respect to large perturbations. The drift needed is more sensitive to the perturbation. Especially, its strength depends on the $L^\infty$ norm of the perturbation. We will show that the relative entropy is equivalent to the $L^2$ norm on any bounded sets for $(\rho,u)$ (respectively $(\rho,u,E)$). Unfortunately, it is not true globally. This explains why the result for the $L^2$ norms depends on the $L^\infty$ norm of $U$.
\vskip0.3cm
A good feature of the theory is that it can handle vacuum. Indeed, the solutions are not assumed to be away from vacuum. This gives a $L^2$ stability results up to the translation $x(t)$.  For $2 \times 2$ systems, all Rankine-Hugoniot discontinuities are extremal. Hence Theorem \ref{theoEuler} applies to all admissible shocks of (\ref{isentropic}). For the full Euler system, 
all shocks are 1-shocks or n-shocks (3-shocks in this case), so again the result applies to any entropy admissible shock. However, our result does not provide the stability of contact discontinuities for this system.  Note that in the isentropic case with $P(\rho) = \rho^{\gamma}$ ($\gamma > 1$), it is enough to assume that the initial values are bounded since solutions can be constructed conserving this property (see Chen \cite{Chen3}, or Lions Perthame Tadmor \cite{LPT}, for instance). 
\vskip0.3cm
We now show an application of our method in the general setting of strictly hyperbolic conservation laws with  genuinely nonlinear characteristic fields.
We consider an $n \times n$ system of conservation laws
\begin{align}\label{cl1}
\dt U+\dx A(U)=0,
\end{align}
which has a strictly convex entropy $\eta$. Assume that $A$ and $\eta$ are of class $C^2$ on an open state domain $\mV \subset \R^n$.
We have the following result.

\begin{theo}\label{theoHyper}
Assume that the smallest (resp. largest) eigenvalue of $\nabla A(V)$ is simple for all $V \in \mV$, and that the corresponding 1-characteristic family (resp. n-characteristic family) of (\ref{cl1}) is  genuinely nonlinear.  Then, for any $V_0 \in \mV$, there exists $K>0$, $a>0$, and  $C>0$ such that, for any entropy-admissible 1-shock  (resp. n-shock) with speed $\sigma$ and endstates $(U_L, U_R)$ verifying $U_L \in B_K(V_0)$ and $U_R \in B_K(V_0)$, the following is true.
For any weak entropic solution $U$ bounded in $B_K(V_0)$ on $(0,T)$ (with possibly $T=+\infty$), 
 there exists a Lipschtiz path $x(t)$ such that for any $t<T$, the pseudo norm
$$
\int_{-\infty}^0\eta(U(t,x+x(t))| U_L)\,dx+a\int_0^\infty \eta(U(t,x+x(t)|U_R)\,dx
$$
is  non increasing in time. Moreover, for every $t<T$:
\begin{eqnarray*}
&&|x'(t)|\leq C,\\
&&|x(t)-\sigma t|\leq C\sqrt{t}\|U_0-S\|_{L^2(\R)},
\end{eqnarray*}
where $U_0=U(t=0)$, and $S(x)=U_L$ for $x<0$ and $S(x)=U_R$ for $x>0$.
\vskip0.3cm
Especially, we have for every $t>0$
$$
\|U(t,\cdot+x(t))- S\|_{L^2(\R)}\leq C \|U_0- S\|_{L^2(\R)}.$$
\vskip0.3cm
Finally, we can choose $a<1$ of a 1-shock, and $a>1$ for a n-shock.
\end{theo}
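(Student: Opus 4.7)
The plan is to reduce Theorem~\ref{theoHyper} to the abstract contraction theorem for extremal Liu shocks presented in the following section, by verifying that all of its structural hypotheses hold uniformly in the endstates $(U_L,U_R)$ once the state variable is restricted to a sufficiently small ball $B_K(V_0) \subset \mV$. Three hypotheses need to be checked on such a ball: extremality of the 1-characteristic (resp. $n$-characteristic) family together with a positive separation from the other characteristic speeds; the Liu admissibility condition; and a strictly monotone growth of the shock strength, measured through the relative entropy, along the entropy-admissible half of the shock curve.

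First, strict hyperbolicity and continuity of the spectrum of $\nabla A$ imply that, since the smallest (resp. largest) eigenvalue is simple at $V_0$, it remains simple and uniformly separated from the rest of the spectrum on $B_K(V_0)$ provided $K$ is small enough. This yields the extremality/separation hypothesis, and ensures that the 1-Hugoniot locus through any $U_L \in B_K(V_0)$ is a smooth curve $s \mapsto U_R(s)$ that stays in a slightly larger ball. Genuine nonlinearity persists on $B_K(V_0)$ after shrinking $K$ if necessary, so the classical Lax expansion applies: the shock speed $\sigma(s)$ is strictly monotone along the curve, the admissible half coincides with the Lax-admissible half, and on the latter the Liu condition reduces to and follows from the Lax condition.

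The main technical step is the third hypothesis, namely the monotone growth of $s \mapsto \eta(U_R(s)\mid U_L)$ along the admissible part of the Hugoniot curve, uniformly in $U_L \in B_K(V_0)$. Near $s=0$, a Taylor expansion together with the strict convexity of $\eta$ gives $\eta(U_R(s)\mid U_L) = c\, s^2 + O(s^3)$ with $c > 0$, hence strict monotonicity in $|s|$ on a neighborhood of the origin, and the constant $c$ can be chosen uniformly in $U_L \in B_K(V_0)$. Shrinking $K$ preserves this perturbative regime. The real difficulty is that this monotonicity must be quantitative enough to dominate the error terms produced by the dissipation-defect analysis used in the abstract theorem; this is where the weight $a$ enters, and the freedom to choose $a$ close to $1$ (with $a<1$ for a 1-shock and $a>1$ for an $n$-shock) is what closes the estimate.

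Once the hypotheses are verified on $B_K(V_0)$, the abstract theorem directly yields the weight $a$, the Lipschitz shift $x(t)$ with $|x'(t)| \leq C$, and the non-increasingness of the weighted pseudo-distance $\mathcal{D}(t)$. The construction produces $x'(t)$ pointwise from the one-sided traces $U_\pm$ at $x(t)$, and it reduces to $\sigma$ whenever these traces equal $(U_L,U_R)$; in fact $|x'(t)-\sigma|^2$ is controlled by the dissipation $-\tfrac{d}{dt}\mathcal{D}(t)$. Integrating in time, using $\mathcal{D}(0) \leq C\|U_0 - S\|_{L^2}^2$, and applying Cauchy-Schwarz then yields
\[
|x(t)-\sigma t| \;\leq\; \int_0^t |x'(s)-\sigma|\, ds \;\leq\; \sqrt{t}\, \Bigl(\int_0^t |x'(s)-\sigma|^2\, ds\Bigr)^{1/2} \;\leq\; C\sqrt{t}\, \|U_0 - S\|_{L^2(\R)}.
\]
Finally, the $L^2$ stability bound follows from the two-sided equivalence, on the bounded set $B_K(V_0)$, of the relative entropy with $|U - U_{L,R}|^2$ (which holds because $\eta$ is $C^2$ and strictly convex there), combined with $\mathcal{D}(t) \leq \mathcal{D}(0)$.
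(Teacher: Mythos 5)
Your overall strategy is the paper's: Theorem \ref{theoHyper} is obtained by checking that, for $K$ small, the hypotheses (H1)--(H3) (resp.\ (H$'$1)--(H$'$3)) of Section \ref{section_presentation} hold for endstates in $B_K(V_0)$ --- simplicity and separation of the extremal eigenvalue, the Liu/Lax condition from genuine nonlinearity, and the growth of the relative entropy along the shock curve via the quadratic expansion --- and then invoking Theorem \ref{main}. (The paper defers this verification to \cite{LV}.) Two remarks on that part: the quantity in (H1)(b) is $\frac{d}{ds}\eta(U\midd S_U(s))$, with the base point $S_U(s)$, not $\eta(S_U(s)\midd U)$ as you wrote (harmless at the quadratic order you use, but worth stating correctly); and your description of the role of $a$ is backwards --- $a$ is not taken ``close to $1$'' to close an estimate, it is taken \emph{small} (Proposition \ref{lemm_a}) so that the set $\Oa=\{\eta(U\midd U_L)-a\eta(U\midd U_R)<0\}$ collapses into a small ball around $U_L$, which is what makes Lemma \ref{lemm_def_C0} and the DiPerna-type Lemma \ref{cornerstone} applicable there.

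The genuine gap is the drift estimate $|x(t)-\sigma t|\leq C\sqrt{t}\,\|U_0-S\|_{L^2}$. You derive it from the claim that $|x'(t)-\sigma|^2$ is controlled pointwise in time by the dissipation $-\frac{d}{dt}\E(t)$. That claim is nowhere among the conclusions of the abstract theorem, you give no argument for it, and it is far from clear: the construction of $x'(t)$ only brackets it between $\Vi(t)$ and $\Vs(t)$, and in the generic regime where the traces lie in $B(U_L,C_0)$ one gets $x'(t)=v\neq\sigma$ with a dissipation bound that degenerates as the shock $(U_L,U_R)$ or the constant $\beta$ of Lemma \ref{lemm_def_C0} does, so any such inequality would at best require a separate quantitative lower bound on the entropy dissipation in each of the three cases of the proof of Theorem \ref{main}. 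The paper obtains the drift bound by an entirely different and elementary route: it tests the weak formulation of (\ref{system}) against a cutoff $\phi$ adapted to the cone $|x|\leq 2MT$, which isolates the term $(x(t)-\sigma t)(U_L-U_R)$ exactly, and bounds the remaining terms by $C\sqrt{T}\,\|U_0-S\|_{L^2}$ using Cauchy--Schwarz together with the already-established $L^2$ stability $\|U(t,\cdot+x(t))-S\|_{L^2}\leq C\|U_0-S\|_{L^2}$. You should replace your dissipation argument by this weak-formulation argument (or supply a genuine proof of the dissipation lower bound, which would be a new result, not a citation).
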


In particular, this provides $L^2$ stability, up to a drift, for suitably weak shocks in a class of perturbations without $BV$ conditions. Note that the assumption of genuinely nonlinearity  applies only to the wave family associated to the extremal eigenvalue. No such assumptions are needed on the other wave families.
\vskip0.3cm
At least in the scalar case, the estimate on the drift $x(t)$ is rather precise.
We will show the following proposition.
\begin{prop}\label{prop_xt}
Consider the Burgers equation
$$
\dt u+\dx u^2=0,
$$
and the steady shock solution
$$
S(x)=1 \ \ \mathrm{for} \ \  x<0, \mathrm{\ \ and \ \ } S(x)=-1 \ \ \mathrm{for} \ \  x>0.
$$
For any $p>1/2$, there exists a constant $C_p$ such that for any  $\eps$ small enough, there exists a initial value $u^0$ such that the associated unique entropic solution $u$ has a  
drift $t\to x(t)$ verifying that
$$
\int_\R|u(t,x)-S(x-x(t))|^2\,dx \mathrm {\ \ is \ \  non \ increasing.}
$$
Moreover, for any such drift and $t>1$
$$
x(t)\geq C_p \sqrt{\eps} t^p, \qquad \mathrm{with} \ \ \eps=\int_\R|u^0(x)-S(x)|^2\,dx.
$$
\end{prop}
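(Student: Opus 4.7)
The plan is to exhibit, for a given $p\in(1/2,1)$, an explicit initial datum $u^0$ that is $L^2$-close to $S$ but whose Burgers evolution produces a physical shock drifting at rate $\sqrt{\eps}\,t^p$, and then to argue that any admissible drift must track it. Setting $\beta := 1-p \in (0,1/2)$, I consider a truncated power-law bump to the left of the shock,
\[
u^0(x) = \begin{cases} 1 + c\,(1+|x|)^{-\beta} & -T < x < 0,\\ 1 & x \leq -T,\\ -1 & x \geq 0,\end{cases}
\]
with parameters $c>0$ and $T\gg 1$ to be tuned. A direct computation gives $\eps = c^2\int_0^T (1+r)^{-2\beta}\,dr \asymp c^2 T^{2p-1}/(2p-1)$, and inverting yields $c = \sqrt{(2p-1)\eps}\,T^{1/2-p}$.

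Next I compute the Burgers evolution explicitly. Because $u^0$ is non-decreasing on $(-T,0)$, the characteristic speeds $2u^0(x_0)$ are monotone and no internal shock can form before the main discontinuity; a short bootstrap (using that the main shock itself moves at speed $O(c)\ll 2$, so arrival times differ from $|x_0|/2$ by a factor $1+O(c)$) gives the left trace $u_L(t) = 1 + c(1+2t)^{-\beta} + O(c^2)$ for $1\leq t\leq T/2$. The Rankine--Hugoniot speed $u_L(t)+u_R(t) = c(1+2t)^{-\beta}+O(c^2)$ then integrates to
\[
X(t) \;\geq\; \frac{c}{p\,2^{1-p}}\,t^p \;=\; \frac{\sqrt{2p-1}}{p\,2^{1-p}\,T^{p-1/2}}\,\sqrt{\eps}\,t^p,
\]
where $X(t)$ denotes the physical shock position. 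Choosing $T=T(p,C_p)$ so that the prefactor equals $C_p$ (namely $T = (\sqrt{2p-1}/(p\,2^{1-p}C_p))^{1/(p-1/2)}$) yields $X(t)\geq C_p\sqrt{\eps}\,t^p$ on $[1,T/2]$.

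Finally, to transfer this bound to an arbitrary admissible drift $x(t)$, I expand the functional $F(t,y) := \int_\R |u(t,\cdot) - S(\cdot-y)|^2\,dx$ around its minimum at $y = X(t)$: the size-$2$ jump of $u$ at $X(t)$ yields the linear part $F(t,y) = F(t,X(t)) + 4|y-X(t)| + o(|y-X(t)|)$, while the minimum equals the $L^2$-norm squared of the unabsorbed characteristic tail,
\[
F(t,X(t)) = c^2\int_{2t}^T (1+r)^{-2\beta}\,dr = \eps\bigl(1 - (2t/T)^{2p-1}\bigr) + o(\eps).
\]
The non-increasing constraint $F(t,x(t))\leq F(0,0) = \eps$ then forces $|x(t)-X(t)|\leq \eps/4$, and combining with the lower bound on $X(t)$ gives $x(t)\geq \tfrac12 C_p\sqrt{\eps}\,t^p$ on $[1,T/2]$ for $\eps\leq C_p^2$ small enough.

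The main obstacle is this last transfer step: it requires a quantitative coercivity estimate for $F(t,\cdot)$ around its minimum, identifying $X(t)$ as the minimizer, the linear growth of slope $4$ from the jump, and the precise value of $F(t,X(t))$ in terms of the unabsorbed perturbation. The preceding characteristic analysis and the tuning of $T$ in terms of $p$ and $C_p$ are otherwise routine, though one must track all constants carefully to confirm that the resulting $C_p$ depends only on $p$.
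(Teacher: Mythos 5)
There is a genuine gap: your construction only yields the lower bound on the finite window $[1,T/2]$, while the proposition requires it for all $t>1$. Because you truncate the perturbation at $x=-T$, every perturbed characteristic has been absorbed by the shock by time $\sim T/2$; afterwards the left trace is $u_L\equiv 1$, the Rankine--Hugoniot speed vanishes, and the physical shock position stalls at a finite limit $X(\infty)\asymp c\,T^{1-\beta}\asymp\sqrt{\eps T}$. No choice of $T$ can repair this (and the prescription ``choose $T$ so that the prefactor equals $C_p$'' is in any case circular, since $C_p$ is what the proof must produce). In fact the literal statement with $p>1/2$ cannot be proved for all $t>1$ because it is false: Theorem \ref{main} (or Leger's theorem) produces an admissible drift with $|y(t)|\leq C\sqrt{t}\,\|u^0-S\|_{L^2}$, and your own transfer step (or the paper's simpler one) shows that any two admissible drifts differ by $O(\eps)$; hence every admissible drift is $O(\sqrt{\eps t}+\eps)$, which is eventually smaller than $C_p\sqrt{\eps}\,t^p$ when $p>1/2$. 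The ``$p>1/2$'' in the statement is evidently a typo for ``$p<1/2$'': the proposition is meant to show that the $\sqrt{t}$ growth of the drift allowed by Theorem \ref{main} is essentially sharp.

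The paper's proof is exactly your characteristic computation but without the truncation: it takes $u^0(x)=1+\sqrt{2r\eps}\,(1-x)^{-1/2-r}$ for $x<0$ (with $r>0$ small, so the tail is square integrable and $\|u^0-S\|_{L^2}^2=\eps$ exactly) and $u^0=-1$ for $x>0$. The infinite tail feeds the shock forever: $x'(t)=u(t,x(t)-)-1\geq\sqrt{2r\eps}\,(1+4t)^{-1/2-r}$, hence $x(t)\geq C_r\sqrt{\eps}\,t^{1/2-r}$ for all $t\geq1$, i.e., $p=1/2-r$. For the transfer to an arbitrary admissible drift $y$, the paper avoids your coercivity expansion of $F(t,\cdot)$ and simply uses $4|x(t)-y(t)|=\|S(\cdot-x(t))-S(\cdot-y(t))\|_{L^2}^2\leq 2\|u(t)-S(\cdot-x(t))\|_{L^2}^2+2\|u(t)-S(\cdot-y(t))\|_{L^2}^2\leq C\eps$, which is dominated by $\sqrt{\eps}\,t^p$ for $t\geq1$ and $\eps$ small. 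If you aim at $p<1/2$, take decay exponent $1-p>1/2$, and drop the truncation, the rest of your argument goes through for all $t\geq 1$.
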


The theorems above highlight only a few applications of our theory. 
In the next section, we develop our methods in a more general framework.
The assumptions on the Hugoniot curves are quite natural and we require no smallness condition on the discontinuities at play. We can even relax the strict hyperbolicity condition and consider cases in which the middle eigenvalues degenerate and possibly cross each other.

\section{Presentation of the results}\label{section_presentation}

\subsection{General framework}

We want to study a system of $m$ equations of the form
\begin{equation}\label{system}
\dt U + \dx A(U)=0,
\end{equation}
where the flux function $A$ is defined on an open,  convex set $\mV \subset \R^m$. 
$$
A: \mV\subset \R^m\longrightarrow \R^m.
$$
We assume that $A \in C^2(\mV)$. We assume that the system is hyperbolic on $\mV$. That is, for any $U\in \mV$, the $m\times m$ matrix $\nabla A(U)$ is diagonalizable. 
We denote by $\lambda^-(U)$ and $\lambda^+(U)$ the smallest and largest eigenvalues, respectively, of $\nabla A(U)$. Hereafter, we assume that  $\lambda^\pm(U)$ are simple eigenvalues for all $U\in \mV$ (But  we do not make such hypothesis for the other eigenvalues).

Additionally, we assume the existence of a strictly convex entropy
$$
\eta: \mV\subset \R^m\longrightarrow \R,
$$
of class $C^2$, and an associated entropy flux
$$
G: \mV\subset \R^m\longrightarrow \R,
$$
of class $C^2$, such that the following compatibility relation holds on $\mV$.
\begin{equation}\label{entropy flux}
\partial_j G=\sum_{i=1}^m \partial_i \eta \thinspace \partial_j A_i \qquad \mathrm{for \  any \ } 1\leq j\leq m.
\end{equation}

If we want to apply our theory to the systems of gas dynamics, we have to define these functions on a suitable subset of the boundary of $\mV$, namely the points corresponding to vacuum states. 
For this reason, we introduce  as in  \cite{Vasseur_Book}
\begin{equation*}\label{mU}
\mU=\{V\in\R^m \ \vert \ \ \exists V_k\in \mV, \ \lim_{k\to\infty} V_k=V, \ \limsup_{k\to\infty} \ \eta(V_k)<\infty\},
\end{equation*}
and extend the entropy functional $\eta$ on $\mU$ by
$$
\eta(\overline{U})=\liminf_{\mV\ni U\to \overline{U}}\eta(U).
$$
Note that if $\eta$ is unbounded on $\mV$, $\mU$ can be strictly smaller than $\overline{\mV}$. This happens for  the Euler system. In this case, the non vacuum states are  $(\rho,\rho u, \rho E)\in \mV=(0,\infty)\times \R\times (0,\infty)$. And $\mU=\mV\cup \{(0,0,0)\}$, while $\overline{\mV}=[0,\infty)\times\R\times[0,\infty)$ includes non physical states. In the general case, $\mU$ is still convex, and $\eta$ is convex on $\mU$ (see \cite{Vasseur_Book}).  We denote by $\mU^0$ the subset of $\mU$ where at least one of the functions $\eta$, $A$, $G$ is not $C^1$ (typically the vacuum states). 
Still, $A$, $\eta$, and $G$ may not be even continuous on $\mU$ (as for the Euler system because of large velocities).  We will restrict our study to weak solutions whose values are in a subset $\mU_K$ verifying
\begin{equation}\label{1*}
\begin{array}{l}
 \mU_K \mathrm {\  is \ a \ convex \ bounded \ subset \  of  \ }  \mU,\\
\mathrm{The \  functions \ } A, \ \eta, \ \mathrm{and} \ G \mathrm{\  are\  continuous \ on \  } \mU_K,\\
\mathrm{ \  (possibly \ with \ no \   additional \  regularity \ up \  to \  the \  boundary).}
\end{array}
\end{equation}
We consider cases where $\mU^0\subset \mU_K$. In this case, $A$, $\eta$, and $G$ may not be $C^1$ on $\mU_K$.  The system may even fail to be hyperbolic on $\mU_K$. Indeed, the eigenvalues may be undefined on $\mU^0$.
\vskip0.3cm
We will be careful to show that the pseudo norm does not depend on $\mU_K$. Only the drift does. 
Note that for the Euler system, we can consider 
$
\mU_K=\{U=(\rho,\rho u,\rho E) \ \setminus \  \sup(\rho,|u|,|E|)\leq K, \ \ \rho\geq0\},
$
which includes the vacuum $\mU^0=\{(0,0,0)\}$. 
\vskip0.5cm
\vskip0.5cm
Next, we define, for any $V\in \mV$, $U\in\mU$, the relative entropy function
$$
\eta(U \midd V)=\eta(U)-\eta(V)- \nabla \eta(V)\cdot(U-V).
$$
Since $\eta$ is convex on $\mU$ and strictly convex in $\mV$, we have (see \cite{Vasseur_Book})
$$
\eta(U \midd V)\geq 0,\qquad U\in\mU, \ V\in \mV,
$$
and
$$
\eta(U \midd V)=0 \quad \mathrm{if \ and \ only \ if} \quad U=V.
$$
The following lemma shows that the relative entropy is comparable to the square of the $L^2$ norm on $\mU_K$ (it is usually not true on $\mU$).
\begin{lemm}\label{L2}
For any compact set $\Omega \subset \mV$, there exist $C_1, C_2 > 0$ (depending both on $\Omega$ and $\mU_K$) such that 
$$
C_1|U-V|^2\leq\eta(U \midd V)\leq C_2|U-V|^2,
$$
for any $U\in \mU_K$ and  $V \in \Omega$.
\end{lemm}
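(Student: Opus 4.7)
The plan is to split the estimate into a ``near'' regime where $U$ is close to $V$ and a ``far'' regime where $|U-V|$ is bounded below. Because $\Omega$ is a compact subset of the open set $\mV$, there exists $r>0$ such that the $2r$-neighborhood $\Omega_{2r}:=\{W\in\R^m : d(W,\Omega)\leq 2r\}$ is still a compact subset of $\mV$. On $\Omega_{2r}$ the entropy $\eta$ is $C^2$ and strictly convex, so its Hessian is continuous and positive definite; by compactness there exist $0<c_1\leq c_2<\infty$ with $c_1 I\leq \nabla^2\eta(W)\leq c_2 I$ for all $W\in\Omega_{2r}$.

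For the near regime, I would fix $V\in\Omega$ and $U\in\mU_K$ with $|U-V|\leq r$, so that the whole segment $[V,U]$ lies in $\Omega_{2r}\subset\mV$, and use the Taylor identity
$$
\eta(U \midd V)=\int_0^1 (1-t)\,(U-V)^\top\nabla^2\eta\bigl(V+t(U-V)\bigr)(U-V)\,dt.
$$
The two-sided Hessian bound above immediately gives $\tfrac{c_1}{2}|U-V|^2\leq \eta(U\midd V)\leq \tfrac{c_2}{2}|U-V|^2$, which is the desired quadratic equivalence in this regime.

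For the far regime, $|U-V|\geq r$ and $|U-V|$ is bounded above by some $D$ since $\mU_K$ is bounded. The upper bound is easy: $\eta$ is continuous on $\mU_K$ and $\mU_K$ is bounded (so its closure is compact and $\eta$ is bounded on $\mU_K$), while $\eta(V)$ and $\nabla\eta(V)$ are bounded on the compact $\Omega$; hence $\eta(U\midd V)\leq M$ on this regime, so $\eta(U\midd V)\leq (M/r^2)|U-V|^2$. The real work is the lower bound: I need a uniform $m>0$ with $\eta(U\midd V)\geq m$ on the set $\Sigma:=\{(U,V)\in\mU_K\times\Omega : |U-V|\geq r\}$. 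Pointwise, $\eta(U\midd V)>0$ whenever $U\neq V$ with $V\in\mV$: for $U\in\mV$ this is strict convexity, and for $U$ on the ``bad'' boundary (e.g.\ vacuum) one looks at $\phi(t):=\eta(V+t(U-V)\midd V)$, which is convex in $t$ with $\phi(0)=\phi'(0)=0$ and $\phi(t_0)>0$ for small $t_0$ by strict convexity inside $\mV$; convexity then forces $\phi(1)\geq \phi(t_0)/t_0>0$. Upgrading this to the uniform bound $\inf_\Sigma \eta(\,\cdot\midd\cdot)\geq m>0$ is the main obstacle: passing to a minimizing sequence $(U_n,V_n)$, $V_n$ subconverges in the compact $\Omega\subset\mV$ and $U_n$ subconverges in the compact closure $\overline{\mU_K}$, and I use continuity of $\eta$ on $\mU_K$ together with the lower semicontinuity built into the definition $\eta(\bar U)=\liminf_{U\to\bar U}\eta(U)$ to conclude that any limit $(U_\infty,V_\infty)$ satisfies $\eta(U_\infty\midd V_\infty)\leq 0$, forcing $U_\infty=V_\infty$ and contradicting $|U_n-V_n|\geq r$. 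Combining the four bounds with $C_1=\min(c_1/2,m/D^2)$ and $C_2=\max(c_2/2,M/r^2)$ finishes the proof.
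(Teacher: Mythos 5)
Your proof is correct and follows the standard near/far decomposition used in the references the paper cites for this lemma (the paper itself defers the proof): a Taylor expansion with a uniformly positive-definite Hessian on a compact neighborhood of $\Omega$ inside $\mV$ when $|U-V|\leq r$, and a compactness plus lower-semicontinuity argument (exploiting that $\eta(U\midd V)=0$ forces $U=V$) when $|U-V|\geq r$. The only caveats are interpretive rather than substantive: you read ``strictly convex'' as ``positive-definite Hessian'' and you deduce boundedness of $\eta$ on $\mU_K$ from continuity on a bounded set (which strictly requires $\mU_K$ to be closed, as it is in the paper's examples); both readings are forced, since the two-sided quadratic bound of the lemma is false without them.
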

A proof of this lemma can be found in \cite{Vasseur_Book} and in \cite{LV}. Obviously, the lemma holds also for $(U,V)\in \Omega^2$, $\Omega$ compact set of $\mV$.
\vskip0.3cm

For a pair of states $U_L \ne U_R$ in $\mV$, we say that $(U_L,U_R)$ is an entropic Rankine-Hugoniot discontinuity if there exists $\sigma\in \R$ such that 
\begin{equation}\label{RH}
\begin{array}{l}
\ds{A(U_R)-A(U_L)=\sigma(U_R-U_L),}\\[0.3cm]
\ds{G(U_R)-G(U_L)\leq\sigma(\eta(U_R)-\eta(U_L))}.
\end{array}
\end{equation}
Equivalently, this means that the discontinuous function $U$ defined by
\begin{align*}
U(t,x) = 
\begin{cases}
U_L, &\text{if $x<\sigma t$,}\\[0.1 cm]
U_R, &\text{if $x>\sigma t$,}\\
\end{cases}
\end{align*}
is a weak solution to (\ref{system}) verifying also, in the sense of distributions, the entropy inequality
\begin{equation}\label{entropie}
\dt \eta(U)+\dx G(U)\leq 0.
\end{equation}

\subsection{Hypotheses on the system}

We take the same set of hypotheses as in \cite{LV}, except that we consider only the case of shocks (not contact discontinuities), and strict inequality in the Liu conditions (H1)(a) and (H1)(b) (respectively (H1')(a) and (H1')(b)).
\vskip0.3cm

First, we assume that for any $(U_-, U_+)$ entropic Rankine-Hugoniot discontinuity with $U_-\neq U_+$ we have both $U_-\notin \mU^0$ and   $U_+\notin \mU^0$. (Typically, there is no shock connecting the vacuum.) 
\vskip0.3cm
We will consider two sets of assumptions. One set will imply the result on the 1-shock, the second set (dual from the first one)
will imply the result on the n-shock. A system satisfying both set of hypotheses, verifies both results. 
\vskip0.4cm
\noindent{\bf First set of hypotheses}
\vskip0.1cm 

The first set of hypotheses, related to some  $U_L\in \mV$, is the following ((H1) to (H3)). 
\begin{itemize}
\item[(H1)] (Family of 1-shocks verifying the Liu condition)\\
There exists a neighborhood $B\subset\mV$ of $U_L$ such that for any $U\in B$, there is a one parameter family of states $S_U(s)\in \mU$ defined on an interval $[0,s_U)$ (with possibly $s_U=\infty$),  such that  $S_{U}(0)=U$, and 
$$
A(S_U(s))-A(U)=\sigma_U(s)(S_U(s)-U), \qquad s\in [0,s_U),
$$
(which means that $(U,S_U(s))$ is a Rankine-Hugoniot discontinuity with velocity $\sigma_U(s)$). We assume that $U\to s_U$ is Lipschitz on $B$ and 
both $(s,U)\to S_U(s)$ and $(s,U)\to\sigma_U(s)$ are $C^1$ on $\{(s,U) | U\in B, 0\leq s< s_U\}$. We assume also that the following properties hold for $U\in B$.
\begin{itemize}
\item[(a)] $\sigma_U'(s)< 0$ for $0\leq s < s_U$ (the speed of the shock decreases with $s$), and $\sigma_U(0) = \lambda^-(U)$.
\item[(b)]   $\ds{\frac{d}{ds}\eta(U | S_U(s))}>0$ (the shock "strengthens" with $s$) for all $s$.
\end{itemize}
\item[(H2)] 
If $(U,V)$ is an entropic Rankine-Hugoniot discontinuity, $U\neq V$, with velocity $\sigma$ such that $V \in B$, then $\sigma\geq\lambda^-(V)$.
\item[(H3)]
If $(U,V)$ is an entropic Rankine-Hugoniot discontinuity with velocity $\sigma$ such that $U \in B$ and $\sigma < \lambda^-(U)$, then $(U,V)$ is a 1-shock. In particular, $V = S_U(s)$ for some $0\leq s< s_U$.
\end{itemize}
\vskip0.4cm

\noindent{\bf Second set of hypotheses}
\vskip0.1cm

The second set of hypotheses, related to some  $U_R\in \mV$, is the following (($\text{H}^{\prime}$1)  to ($\text{H}^{\prime}$3)). 
\begin{itemize}
\item[($\text{H}^{\prime}$1)] (Family of  $n$-shocks verifying the Liu condition)\\
There exists a neighborhood $B\subset\mV$ of $U_R$  such that  for every $U\in B$ there is a one parameter family of states $S_U(s)\in \mU$ defined on an interval $[0,s_U)$ (with possibly $s_U=\infty$),  such that $S_{U}(0)=U$, and 
$$
A(S_U(s))-A(U)=\sigma_U(s)(S_U(s)-U), \qquad s\in [0,s_U),
$$
(which means that $(S_U(s),U)$ is a Rankine-Hugoniot discontinuity with velocity $\sigma_U(s)$). We assume that $U\to s_U$ is Lipschitz and 
both $(s,U)\to S_U(s)$ and $(s,U)\to\sigma_U(s)$ are  $C^1$ on $\{(s,U)| U\in B, s\in [0,s_U)\}$. We assume also that the following properties hold for $U\in B$.
\begin{itemize}
\item[(a)] $\sigma_U'(s)> 0$ for $0\leq s< s_U$ (the speed of the shock increases with $s$), and $\sigma_U(0)=\lambda^+(U)$.
\item[(b)] $\ds{\frac{d}{ds}\eta(U | S_U(s))}>0$ (the shock ''strengthens''  with $s$) for all $s$.
\end{itemize}
\item[($\text{H}^{\prime}$2)] 
If $(V,U)$ is an entropic Rankine-Hugoniot discontinuity, $V\neq U$, with velocity $\sigma$ such that $V \in B$, then $\sigma\leq\lambda^+(V)$.
\item[($\text{H}^{\prime}$3)] 
If $(V,U)$ is an entropic Rankine-Hugoniot discontinuity with velocity $\sigma$ such that $U \in B$ and $\sigma > \lambda^+(U)$, then $(V,U)$ is an n-shock. In particular, $V = S_U(s)$ for some $0\leq s< s_U$.
\end{itemize}
\vskip0.5cm
\noindent{\bf Remarks}
\vskip0.1cm

\begin{itemize}
\item Note that a given system (\ref{system}) verifies Properties (H1) to (H3) relative to $U \in \mV$ if and only if  the system 
\begin{equation}\label{reverse system}
 \dt U-\dx A(U)=0,
\end{equation}
verifies Properties ($\text{H}^{\prime}$1) to ($\text{H}^{\prime}$3) relative to the same $U \in \mV$. The properties are, in this way, dual. 
\item Property (H1) assumes the existence of a family a 1-shocks $(U, S_U(s))$ verifying the Liu entropy condition for all $s$ (Property (a)). The only additional requirement is (b), which is a condition on the growth of the shock along $S_U(s)$, where the growth is measured
through the pseudo-metric induced from the entropy. 
This condition arises naturally in the study of admissibility criteria for systems of conservation laws. In particular, it ensures that Liu admissible shocks are entropic even for moderate to strong shocks.
Indeed, this fact follows from the important formula
\begin{align*}
G(S_{U_L}(s))-G(U_L) = \sigma_{U_L}(s) \thinspace (\eta(S_{U_L}(s)) - \eta(U_L)) + \int_0^s\sigma'_{U_L}(\tau)\eta(U_L \midd S_{U_L}(\tau))\,d\tau.
\end{align*}
(See also \cite{Dafermos4,Lax,Ruggeri,LV}.)

\item Hypothesis (H2) is fulfilled under the very general assumption that all the entropic Rankine-Hugoniot discontinuities verify the Lax entropy conditions, that is
$$
\lambda_i(U_-) \geq \sigma \geq \lambda_i(U_+),
$$
for any $i$-shocks $(U_-,U_+)$ with velocity $\sigma$ and any $1\leq i\leq n$. Indeed, we need only the second inequality, and the fact that 
$\lambda_i(U_+)\geq \lambda^-(U_+)$.

\item Hypothesis (H3) is a requirement that the family of 1-discontinuities is well-separated from the other Rankine-Hugoniot discontinuities and do not interfere with them. In the case of strictly hyperbolic systems,
it is, for instance, a consequence of the extended Lax admissibility condition
$$
\lambda_{i+1}(U_+) \geq \sigma \geq \lambda_{i-1}(U_-),
$$
for all i-shocks $(U_-,U_+)$, $i>1$, with velocity $\sigma$.
Indeed, we use only the second inequality and the fact that $\lambda_{i-1}(U_-)\geq\lambda^-(U_-)$. Note that we need to separate only the $1$-shocks
issued from $B$, that is close to $U_L$. 
\item The existence of an entropy $\eta$ implies that the system (\ref{system}) is hyperbolic. Since $A\in C^2(\mV)$, the eigenvalues of $\nabla A(U)$ vary continuously on $\mV$. In particular, since $\lambda^\pm(U)$ are simple for $U\in\mV$, the implicit function theorem ensures that the maps $U\to\lambda^\pm(U)$ are in $C^1(\mV)$. 
Note, however, that those maps may be discontinuous on $\mU$.
\item In \cite{BFZ}, Barker, Freist\"uhler, and Zumbrun   showed that the stability (and so the contraction as well) fails to hold for the full Euler system if Hypothesis (H1 (b)) is replaced by 
$$
\ds{\frac{d}{ds}\eta(S_U(s))}>0, \qquad \mathrm{for \ all \ }s.
$$
It shows that the strength of the shock is better measured by the relative entropy rather than the entropy itself. 
\end{itemize}

\subsection{Statement of the result}

Our main result is the following.
\begin{theo}\label{main}
Consider a system of conservation laws (\ref{system}), such that $A$ is $C^2$ on  an open convex subset  $\mV$ of $\R^m$.
We assume that there exists  a $C^2$ strictly convex entropy $\eta$ on $\mV$ verifying (\ref{entropy flux}). Let $U_L, U_R\in \mV$ such that either the  system (\ref{system}) verifies the Properties (H1)--(H3) and there exists $s>0$ such that $U_R=S_{U_L}(s)$ and $\sigma=\sigma_{U_L}(s)$ (so  $(U_L,U_R)$ is a 1-shock with velocity $\sigma$), or the  system (\ref{system}) verifies the Properties ($\text{H}^{\prime}$1)--($\text{H}^{\prime}$3) and there exists $s>0$ such that $U_L=S_{U_R}(s)$ and $\sigma=\sigma_{U_R}(s)$ (so   $(U_L,U_R)$ is a $n$-shock with velocity $\sigma$). Then, there exists $a>0$ with the following property. For any bounded convex subset $\mU_K$ of $\mU$ on which   $\eta$, $A$ and $G$ are continuous, there exists a constant $C_K>0$ such that the following holds true.  For  any weak entropic solution $U$ of (\ref{system}) with values in $\mU_K$ on $(0,T)$ (with possibly $T=\infty$) verifying the strong trace property of Definition \ref{defi_trace}, there exists a Lipschitzian map $x(t)$ such that for any $0<t<T$ the pseudo norm
$$
\int_{-\infty}^0\eta(U(t,x+x(t))| U_L)\,dx+a\int_0^\infty \eta(U(t,x+x(t)|U_R)\,dx
$$
is  non increasing in time. Moreover, for every $0<t<T$:
\begin{eqnarray*}
&&|x'(t)|\leq C_K,\\
&&|x(t)-\sigma t|\leq C_K\sqrt{t}\|U_0-S\|_{L^2(\R)},
\end{eqnarray*}
where $U_0=U(t=0)$, and $S(x)=U_L$ for $x<0$ and $S(x)=U_R$ for $x>0$.
\vskip0.3cm
Especially, we have for every $t>0$
$$
\|U(t,\cdot+x(t))- S\|_{L^2(\R)}\leq C_K \|U_0- S\|_{L^2(\R)}.$$
\vskip0.3cm
Finally, we can choose $a<1$ of a 1-shock, and $a>1$ for a n-shock.
\end{theo}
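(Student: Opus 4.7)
The plan is to adapt the relative-entropy shift construction originating in \cite{Leger} and extended in \cite{LV}, tuning the weight $a \neq 1$ and the drift velocity $x'(t)$ \emph{simultaneously} so that the dissipation at the moving interface $\{x=x(t)\}$ is non-positive uniformly over trace values in $\mU_K$. By the duality between the two sets of hypotheses noted in the remarks, it suffices to treat the 1-shock case; the $n$-shock statement then follows via $x\mapsto -x$.

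Introduce the relative entropy flux
$$
q(U\midd V):=G(U)-G(V)-\nabla\eta(V)\cdot(A(U)-A(V)),
$$
so that $\dt\eta(U\midd V)+\dx q(U\midd V)\leq 0$ in $\mathcal{D}'((0,T)\times\R)$ for any constant $V\in\mV$ whenever $U$ is a weak entropic solution. Assuming a Lipschitz path $x(t)$ is available, integrating this distributional inequality over $\{x<x(t)\}$ and over $\{x>x(t)\}$, using the strong traces $U_\pm(t)$ at $x(t)$ provided by Definition \ref{defi_trace}, yields for a.e.\ $t$
$$
\frac{d}{dt}E(t)\leq F_a\bigl(U_-(t),U_+(t),x'(t)\bigr),
$$
where
$$
F_a(U_-,U_+,v):=v\bigl[\eta(U_-\midd U_L)-a\,\eta(U_+\midd U_R)\bigr]-q(U_-\midd U_L)+a\,q(U_+\midd U_R).
$$
This functional is affine in $v$ and vanishes at $(U_L,U_R,\sigma)$.

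The core task is then the construction of a bounded Borel velocity selector $V_a:\mU_K\times\mU_K\to\R$ such that $F_a(U_-,U_+,V_a(U_-,U_+))\leq 0$ everywhere. Whenever $\eta(U_-\midd U_L)\neq a\,\eta(U_+\midd U_R)$, the natural choice is the unique root of $F_a=0$ in $v$. Two properties must be established: on the degenerate level set $\{\eta(U_-\midd U_L)=a\,\eta(U_+\midd U_R)\}$, the residual $a\,q(U_+\midd U_R)-q(U_-\midd U_L)$ must have the correct sign, and the root elsewhere must be bounded uniformly on $\mU_K\times\mU_K$. The first follows from the fundamental identity
$$
q(U\midd S_{U_L}(s))-\sigma_{U_L}(s)\,\eta(U\midd S_{U_L}(s))=q(U\midd U_L)-\sigma_{U_L}(s)\,\eta(U\midd U_L)+\int_0^s\sigma'_{U_L}(\tau)\,\eta(U\midd S_{U_L}(\tau))\,d\tau,
$$
together with the strict inequality $\sigma'_{U_L}<0$ from (H1)(a), which produces quantitative entropy surplus along the shock curve, and (H2)--(H3), which preclude any competing entropic Rankine--Hugoniot discontinuity issued from $U_L$ from reversing the balance. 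The weight $a\in(0,1)$ is tuned so that in the extremal regimes where $U_-$ is near $U_L$ while $U_+$ is arbitrary in $\mU_K$ (or vice versa), the imbalance still favours dissipation; this step uses Lemma \ref{L2} for the quadratic behaviour near $(U_L,U_R)$ and compactness of $\mU_K$ to close the estimate.

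With $V_a$ in hand, I would define $x(t)$ as a Lipschitz solution of $x'(t)=V_a(U_-(t),U_+(t))$ with $x(0)=0$. Since $V_a$ is only Borel, the ODE must be solved in an approximation/selection sense: mollify $V_a$, solve the smooth ODE, and extract a Lipschitz limit via Arzel\`a--Ascoli, using the strong trace property to recover the correct traces at the limiting path. The monotonicity of $E(t)$ and the bound $|x'(t)|\leq C_K$ then follow directly. The drift estimate is obtained by writing $x'(t)-\sigma=V_a(U_-(t),U_+(t))-V_a(U_L,U_R)$, bounding this difference by $C_K(|U_-(t)-U_L|+|U_+(t)-U_R|)$ near $(U_L,U_R)$, and integrating in time using the monotonicity $E(t)\leq E(0)\leq C\|U_0-S\|_{L^2}^2$, Lemma \ref{L2}, and Cauchy--Schwarz. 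The principal obstacle, and the technical heart of the paper, is the global selection step: producing a single $a\in(0,1)$ and a bounded $V_a$ that work for \emph{every} $(U_-,U_+)\in\mU_K\times\mU_K$ without any smallness hypothesis, which is where the strict inequalities in (H1) and the separation assumptions (H2)--(H3) are used in an essential way.
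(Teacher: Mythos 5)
Your skeleton matches the paper's at the broadest level (relative entropy flux, reduction to 1-shocks by duality, the Lax/DiPerna identity along the shock curve, tuning $a$ small), but three steps as you describe them would not go through, and they are precisely the technical heart of the argument. First, you ask for a \emph{bounded} selector $V_a(U_-,U_+)$ with $F_a(U_-,U_+,V_a)\leq 0$ for \emph{every} pair in $\mU_K\times\mU_K$, taking the root of the affine function off the degenerate set. That root blows up as $\eta(U_-\midd U_L)-a\,\eta(U_+\midd U_R)\to 0$ unless the residual also vanishes there, and the paper points out (citing \cite{SV}) that the residual does \emph{not} have a sign on the degenerate set for general pairs. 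The paper avoids this by (i) using a velocity $V(U)$ of a \emph{single} state, equal to the constant $v\in(\sigma,\lambda^-(U_L))$ of Lemma \ref{lemm_def_C0} on a ball $B(U_L,C_0)$ that strictly contains the degenerate set (so $V$ is bounded and continuous), (ii) running a Filippov-type construction giving only the sandwich $\Vi(t)\leq x'(t)\leq\Vs(t)$, and (iii) exploiting that for a.e.\ $t$ the traces $(U_-(t),U_+(t))$ form an entropic Rankine--Hugoniot discontinuity with speed $x'(t)$ (Lemma \ref{Dafermos}); hypotheses (H2)--(H3) together with $x'(t)\leq v<\lambda^-(U_-)$ then force the only hard case to be a 1-shock issued from $U_-\in\Oa$, which is exactly the content of Proposition \ref{lemm_a}. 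You never need (and cannot get) the inequality for arbitrary pairs. Second, your tuning of $a$ "using compactness of $\mU_K$" would make $a$ depend on $K$, contradicting the statement that the pseudo-norm depends only on the shock; the paper obtains a $K$-independent $a^*$ from the quantitative dissipation of Lemma \ref{cornerstone} (linear in $|\sigma_U(s)-\sigma_U(s_0)|$ for large $s$), which works even when the 1-shock curve is unbounded in $\mV$.

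Third, the drift estimate as you propose it fails. Writing $x'(t)-\sigma=V_a(U_-(t),U_+(t))-V_a(U_L,U_R)$ and bounding by $C(|U_-(t)-U_L|+|U_+(t)-U_R|)$ cannot be integrated in time using the monotonicity of $E$: the contraction controls $\|U(t,\cdot+x(t))-S\|_{L^2(dx)}$, which gives no pointwise-in-$t$ control of the one-point traces $U_\pm(t)$; moreover $V(U)$ is not close to $\sigma$ away from $\Oa$, and Proposition \ref{prop_xt} shows the drift genuinely grows like $t^{p}$, $p<1/2$, so no pointwise smallness of $x'(t)-\sigma$ is available. The paper instead tests the conservation law against a cutoff $\phi$ supported in $\{|x|\leq 2MT\}$, isolates the term $(x(t)-\sigma t)(U_L-U_R)$, and bounds the remainders by $C_K\sqrt{T}\,\|U_0-S\|_{L^2}$ via Cauchy--Schwarz in $x$; this weak-formulation argument is needed and is absent from your proposal.
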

Note that the pseudo norm  (and $a$ which defines it) does not depend on $\mU_K$. Therefore, it does not depend on any quantitative property of $U$ (especially, not on its $L^\infty$ norm). 
\vskip0.3cm
The correction of the position of the approximated shock $x(t)$ is fundamental, since the 
result is trivially wrong without it, even for Burgers' equation in the scalar case (see \cite{Leger}). Part of the difficulty of the proof is to find this correct position.
\vskip0.3cm
Note that it is enough to show the result for a 1-shock. The result for n-shocks is obtained applying the result for 1-shocks on  $\tilde{U}(t,x)=U(t,-x)$, which is an entropic solution to (\ref{reverse system}).
In particular, if we consider a $2\times2$ system which verifies both (H1)--(H3) for any $U_L\in\mV$ and ($\text{H}^{\prime}$1)--($\text{H}^{\prime}$3) for any $U_R\in\mV$, then
all shocks  are unique and stable in $L^2$. 
\vskip0.3cm
Note that the assumptions on the system are quite minimal. There is an assumption only on the wave coming from $U_L$ (or going to $U_R$  for n-shocks). There are absolutely no assumptions on the other waves (which may not even exist or may be neither genuinely nonlinear nor linearly degenerate). The extremal property that the shock curve under consideration corresponds to the smallest eigenvalue of $\nabla A(U)$ (resp. the largest 
eigenvalue of $\nabla A(U)$) is only prescribed on a small neighborhood of $U_L$ (resp. of $U_R$). 
Finally the theory allows us (via the extended set $\mU$) to consider weak solutions which
may take values $U$ corresponding to points of non-differentiability of $A$ and $\eta$. This includes, for example, the vacuum states in fluid mechanics. It has been verified in \cite{LV} that the isentropic Euler system, the full Euler system, and the general case stated in the introduction verify the Hypotheses (H1)--(H3) and ($\text{H}^{\prime}$1)--($\text{H}^{\prime}$3). Therefore theorems (\ref{theoEuler}), and (\ref{theoHyper}) are consequences of Theorem \ref{main}.

\subsection{Main ideas of the proof}

We will restrict our proof to the case of a 1-shock. The result on n-shock is a direct consequence of it as explained in the previous section. 
 The following estimate underlies most of our analysis.
\begin{lemm}\label{defi_F}
If $V\in\mV$ and $U$ is any weak entropic solution of (\ref{system}), then $\eta(U \midd V)$ is a solution in the sense of distributions to
$$
\dt \eta(U \midd V)+\dx F(U, V)\leq 0,
$$
where 
$$
F(U,V)=G(U)-G(V)-\nabla \eta(V) \cdot (A(U)-A(V)).
$$
\end{lemm}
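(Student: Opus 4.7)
The proof should be a direct distributional computation once we exploit the fact that $V \in \mV$ is a fixed constant state, so every quantity built from $V$ alone (namely $\eta(V)$, $G(V)$, $A(V)$, $\nabla\eta(V)$, and any combination of them) has vanishing derivatives in $t$ and $x$. The plan is to expand both $\eta(U\midd V)$ and $F(U,V)$ and split them into a piece depending only on $V$, the entropy pair $(\eta(U),G(U))$, and the flux pair $(U,A(U))$ contracted with the constant covector $\nabla \eta(V)$.

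Concretely, from the definitions we have
$$
\eta(U\midd V) = \eta(U) - \nabla\eta(V)\cdot U + \bigl[\nabla\eta(V)\cdot V - \eta(V)\bigr],
$$
$$
F(U,V) = G(U) - \nabla\eta(V)\cdot A(U) + \bigl[\nabla\eta(V)\cdot A(V) - G(V)\bigr].
$$
Applying $\dt$ and $\dx$ respectively and dropping the bracketed constants yields, in $\mathcal{D}'((0,T)\times\R)$,
$$
\dt \eta(U\midd V) + \dx F(U,V) = \bigl[\dt\eta(U) + \dx G(U)\bigr] - \nabla\eta(V)\cdot\bigl[\dt U + \dx A(U)\bigr].
$$
The first bracket is a nonpositive distribution because $U$ is an entropic weak solution, i.e.\ satisfies \eqref{entropie}. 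The second bracket is the vector conservation law \eqref{system}, which vanishes distributionally because $U$ is a weak solution; contracting the zero distribution-valued vector against the constant covector $\nabla\eta(V)\in\R^m$ still gives zero. Adding the two gives the claimed inequality.

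There is essentially no obstacle here: the only mild subtlety is to verify that the manipulations are legitimate at the distributional level. This is clear because $\nabla\eta(V)$ is a constant (well-defined since $V\in\mV$ where $\eta$ is $C^2$), so multiplication of the distributional equation $\dt U + \dx A(U)=0$ by $\nabla\eta(V)$ produces a legitimate scalar distributional identity, and subtraction from the distributional entropy inequality preserves its sign. Continuity of $A$, $G$, $\eta$ at the values taken by $U$ (guaranteed by \eqref{1*} whenever $U$ is $\mU_K$-valued) ensures that $\eta(U)$, $G(U)$, $A(U)$ are well-defined locally integrable functions, so all terms make sense as distributions from the outset.
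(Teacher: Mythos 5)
Your proof is correct and is exactly the argument the paper intends: the paper disposes of this lemma in one sentence by remarking that, since $V$ is constant in $(t,x)$, the map $U\mapsto \eta(U\midd V)$ is still a convex entropy for the system (with flux $F(\cdot,V)$), which is precisely what your affine decomposition of $\eta(U\midd V)$ and $F(U,V)$ verifies before you subtract $\nabla\eta(V)\cdot[\dt U+\dx A(U)]=0$ from the entropy inequality.
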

The proof of this lemma is direct from the definition of the relative entropy (Note that $V$ is constant with respect to $t$ and $x$, and so $U\to \eta(U|V)$ is still a convex entropy for the system). For a given shift $t\to x(t)$, and $a>0$, Let us denote
\begin{equation}\label{def_E}
\E(t)=\int_{-\infty}^{x(t)}\eta(U(t,x)|U_L)\,dx+a\int_{x(t)}^{+\infty}\eta(U(t,x)|U_R)\,dx.
\end{equation}
From  Lemma \ref{defi_F}, and the strong trace property of Definition \ref{defi_trace}, we will show that
\begin{equation}\label{evolution_E}
\begin{array}{l}
\ds{\frac{d}{dt}\E(t) \leq x'(t)\left[\eta(U(t,x(t)-) \midd U_L)-a\eta(U(t,x(t)+) \midd U_R)\right]}\\[0.3cm]
\qquad \qquad \ds{-F(U(t,x(t)-),U_L)+aF(U(t,x(t)+),U_R)} ,
\end{array}
\end{equation}
for almost every $t$. 
The idea, is to construct a shift on the fly, via an ODE, in order to make this contribution non positive. 
\vskip0.3cm 
Let us focus, first, on the situation when $U$ is Lipschitz. In particular we have $U(t,x(t)-)=U(t,x(t)+)=U(t,x)$. When 
$$
\eta(U(t,x(t)) \midd U_L)-a\eta(U(t,x(t)) \midd U_R)=0,
$$
the shift has no effect on the evolution of $\E(t)$. When $a=1$, this corresponds to values of $U$ lying in a whole hyperplane in $\mV$.  For general system (including Euler systems), the contribution 
$$
-F(U,U_L)+F(U,U_R)
$$
is not globally non positive on this hyperplane (see \cite{SV}). However, for $a$ small enough, the set
$$
\Oa=\{ U \ \setminus (\eta(U \midd U_L)-a\eta(U \midd U_R))\leq 0\ \}
$$
is contained in a small ball centered at $U_L$, let say $B(U_L,C_0/2)$ (at the limit $a\to 0$, this converges to the point $U_L$). 
A key observation (Lemma \ref{lemm_def_C0}) is that, whenever the shock $(U_L, U_R)$ with velocity  $\sigma$ is a 1-shock, there exists $v\in (\sigma, \lambda_-(U_L))$ such that the dissipation terms verify
 \begin{equation}\label{eq_good}
 \begin{array}{l}
 -F(U,U_L)+v\eta(U \midd U_L)<0,\\[0.2cm]
F(U,U_R)-v\eta(U \midd U_R)<0,
\end{array}
\end{equation}
on $B(U_L,C_0)$, for $C_0, a$ small enough. 

Then, it is natural to construct the shift in the following way:
$$
 V(U)=v -\frac{[-F(U,U_L)+v\eta(U\midd U_L)]_++a[F(U,U_R)-v\eta(U\midd U_R)]_+}{\eta(U \midd U_L)-a\eta(U \midd U_R)}, \qquad\mathrm{for} \ \  U\in \mV,
$$
where  $[\cdot]_+=\sup(0,\cdot)$.
Then we define $x(t)$ through the ODE:
\begin{equation}\label{ideal}
\dot{x}(t)=V(U(x(t))),\qquad x(0)=0. 
\end{equation}
The function $U\to V(U)$ is well defined on $\mV$ since the numerator vanishes for 
$
U\in B(U_L,C_0) 
$
which contains the set $\{U \setminus \eta(U \midd U_L)-a\eta(U \midd U_R)\}$. Especially, 
$V(U)=v$ for $U\in B(U_L,C_0)$, and so, also for $U\in \mathcal{O}_a$.
Note that whenever $U$ is smooth and is valued in $\mV$, we can solve this ODE in a unique way, and the construction ensures that 
$$
\frac{d}{dt}\E(t) \leq 0.
$$
\vskip0.3cm
Of course, when the solution is discontinuous, (or have values in  $\mU^0$ (the ``vacuum")),  (\ref{ideal}) cannot be solved in the classical sense. Hence we can define $x(t)$ only in the Filippov way. We will have to check carefully that we can do it using only the strong trace property of Definition \ref{defi_trace}. Even so, we cannot ensure that (\ref{ideal}) holds almost everywhere. However, we will use the fact that for almost every time $t$, especially when $U(t,x(t)+) \neq U(t,x(t)-)$, the following Rankine--Hugoniot relation holds:
$$
A(U(t,x(t)+)-A(U(t,x(t)-)=x'(t)(U(t,x(t)+)-U(t,x(t)-)).
$$ 
So, we have to investigate the value of (\ref{evolution_E}) whenever $(U(t,x(t)-), U(t,x(t)+))$ is an entropic discontinuity with velocity $x'(t)$. Note that this case where the drift $x(t)$ is stuck in a shock is quite generic (see the special example in section \ref{scalar}).  We show that whenever 
$U(t,x(t)-)$ and $U(t,x(t)+)$ are both outside $\Oa$, the situation is similar to the continuous case. If one of them is in $\Oa$, using the fact that $(U_L,U_R,\sigma)$ is a 1-shock, and $x'(t)\leq v$, we get that $U(t,x(t)-)$ is in $\Oa$, and $(U(t,x(t)-), U(t,x(t)+))$ is itself a 1-shock with velocity $x'(t)$. If $U(t,x(t)-)=U_L$, then the result comes from a key structural lemma first proved by DiPerna in \cite{DiPerna} (see also \cite{LV}).
Using the dissipation of the shock $(U_L,U_R)$  with velocity  $\sigma$, we show that for $a$ small enough, (\ref{evolution_E}) is still non positive for $U(t,x(t)-)\in\Oa$ whenever $U(t,x(t)+)$ is on the 1-shock curve (even if this curve is unbounded in $\mV$).

\vskip0.3cm
The rest of the paper is organized as follows. In the next section, we prove the main structural lemmas. They do not depend on the solutions $(t,x)\to U(t,x)$, but only the properties of the system. We construct $a$ in this section. Notice that the results of this section do not depend on $\mU_K$ (and so, do not depend on any quantitative bound on the solutions themselves). 
In the following section we construct the path $t\to x(t)$, which depends on $\mU_K$. The next one is dedicated to the proof of the main theorem. In the last section, we prove Proposition \ref{prop_xt}. 

\section{Construction of the pseudo-norm}\label{structure}
The pseudo-norm, based on the relative entropy, is not anymore  homogeneous in $x$. It depends only on the number $a>0$:
\begin{eqnarray*}
d(U,S)(x)&=&\eta(U(x)|U_L) \qquad \mathrm{for} \ x<\sigma t,\\
&=&a \eta(U(x)|U_R) \qquad \mathrm{for} \ x>\sigma t,
\end{eqnarray*}
where $S$ is the fixed shock $(U_L,U_R)$ with velocity $\sigma$. This section is dedicated to the construction of this number $a$. Results in this section do not depend on any particular weak entropic solution $U$  (and so, do not depend on the set $\mU_K$). The results depend only on values of quantities in the state space $\mU$.

The first lemma of this section gives an explicit formula for the entropy lost at a Rankine--Hugoniot discontinuity $(U_-,U_+)$, where $U_+=S_{U_-}(s)$ for some $s>0$. The estimate can be traced back to the work of Lax \cite{Lax}.

\begin{lemm}\label{decreasing}
Assume $(U_-,U_+)\in \mV^2$ is an entropic Rankine-Hugoniot discontinuity with velocity $\sigma$; that is, $(U_-,U_+)$ verifies (\ref{RH}). Then, for any $V\in \mU$
$$
F(U_+,V)-\sigma \eta(U_+ \midd V)\leq F(U_-,V)-\sigma\eta(U_- \midd V),
$$
where $F$ is defined as in Lemma \ref{defi_F}. Furthermore, if $U_-\in B$, as in Hypothesis (H1), and there exists $s>0$ such that $U_+=S_{U_-}(s)$ and $\sigma=\sigma_{U_-}(s)$ (that is, $(U_-,U_+)$ is a $1$-discontinuity), then
$$
F(U_+,V)-\sigma \eta(U_+ \midd V)= F(U_-,V)-\sigma\eta(U_- \midd V)+\int_0^s\sigma'_{U_-}(\tau)\eta(U_- \midd S_{U_-}(\tau))\,d\tau.
$$
\end{lemm}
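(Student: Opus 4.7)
The key observation I would exploit is that the combination $F(U,V)-\sigma\eta(U\midd V)$ has a very clean structure: expanding the definitions,
\begin{equation*}
F(U,V)-\sigma\eta(U\midd V) = \bigl[G(U)-\sigma\eta(U)\bigr] - \bigl[G(V)-\sigma\eta(V)\bigr] - \nabla\eta(V)\cdot\bigl[(A(U)-\sigma U) - (A(V)-\sigma V)\bigr].
\end{equation*}
Because $(U_-,U_+)$ is a Rankine--Hugoniot discontinuity with velocity $\sigma$, the vectors $A(U)-\sigma U$ take the same value at $U=U_-$ and $U=U_+$. Subtracting the above identity at $U_+$ and $U_-$ therefore makes the entire $V$-dependent piece (the $\nabla\eta(V)$ bracket) collapse, leaving the $V$-independent quantity
\begin{equation*}
F(U_+,V)-\sigma\eta(U_+\midd V) - \bigl[F(U_-,V)-\sigma\eta(U_-\midd V)\bigr] = \bigl[G(U_+)-G(U_-)\bigr] - \sigma\bigl[\eta(U_+)-\eta(U_-)\bigr].
\end{equation*}

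The first conclusion of the lemma is then immediate: the entropy part of the Rankine--Hugoniot relation (\ref{RH}) says exactly that the right-hand side above is $\le 0$, which yields the claimed inequality for every $V\in\mU$.

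For the equality statement in the 1-shock case, the right-hand side no longer depends on $V$, so it suffices to establish the Lax-type identity
\begin{equation*}
G(U_+)-G(U_-) - \sigma\bigl(\eta(U_+)-\eta(U_-)\bigr) = \int_0^s \sigma'_{U_-}(\tau)\,\eta\bigl(U_-\midd S_{U_-}(\tau)\bigr)\,d\tau,
\end{equation*}
with $U_+=S_{U_-}(s)$ and $\sigma=\sigma_{U_-}(s)$. I would prove this by introducing
\begin{equation*}
h(\tau) := G(S_{U_-}(\tau)) - G(U_-) - \sigma_{U_-}(\tau)\bigl(\eta(S_{U_-}(\tau))-\eta(U_-)\bigr),
\end{equation*}
which satisfies $h(0)=0$ and, at $\tau=s$, equals the left-hand side. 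Differentiating the Rankine--Hugoniot identity $A(S_{U_-}(\tau))-A(U_-) = \sigma_{U_-}(\tau)(S_{U_-}(\tau)-U_-)$ in $\tau$ gives $(\nabla A(S)-\sigma I)\dot S = \dot\sigma\,(S-U_-)$; combining this with the entropy compatibility relation (\ref{entropy flux}) in the form $\nabla G(S)\cdot\dot S = \nabla\eta(S)\cdot\nabla A(S)\dot S$ lets me rewrite $\tfrac{d}{d\tau}G(S)$ in terms of $\sigma\nabla\eta(S)\cdot\dot S$ and $\dot\sigma\,\nabla\eta(S)\cdot(S-U_-)$. Substituting into $h'(\tau)$, the $\sigma\nabla\eta(S)\cdot\dot S$ terms cancel and one is left with
\begin{equation*}
h'(\tau) = \dot\sigma\bigl[\nabla\eta(S)\cdot(S-U_-) - \eta(S) + \eta(U_-)\bigr] = \sigma'_{U_-}(\tau)\,\eta\bigl(U_-\midd S_{U_-}(\tau)\bigr),
\end{equation*}
by definition of the relative entropy. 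Integrating from $0$ to $s$ produces the desired formula.

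The only nontrivial step is the algebraic cancellation in $h'(\tau)$, where the entropy flux identity (\ref{entropy flux}) must be used in exactly the right form; once the Rankine--Hugoniot relation is differentiated, it is a direct computation. No convexity or sign information on $\sigma'_{U_-}$ is needed here, so the formula holds on the whole shock curve regardless of hypothesis (H1)(a); those properties become relevant only when the identity is later exploited to track the sign of the dissipation.
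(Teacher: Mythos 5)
Your proposal is correct and follows essentially the same route as the paper: the inequality comes from summing the Rankine--Hugoniot relation (paired with $\nabla\eta(V)$) with the entropy inequality, and the integral identity comes from differentiating along the shock curve $\tau\mapsto S_{U_-}(\tau)$, using the compatibility relation (\ref{entropy flux}) together with the differentiated Rankine--Hugoniot condition. The only (cosmetic) difference is that you factor out the $V$-dependence up front and reduce to the $V$-free Lax identity, whereas the paper carries $\nabla\eta(V)$ through the computation and lets it cancel at the level of the derivatives of its two auxiliary functions $\mathcal{F}_1$ and $\mathcal{F}_2$.
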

\begin{proof}
Since $(U_-,U_+)\in \mV^2$ is an entropic Rankine-Hugoniot discontinuity with velocity $\sigma$ we have
$$
-\nabla \eta(V) \cdot (A(U_+)-A(U_-))=-\sigma \nabla \eta(V) \cdot (U_+-U_-),
$$
and 
$$
G(U_+)-G(U_-)\leq \sigma (\eta(U_+)-\eta(U_-)).
$$
Summing those two estimates gives the first result.

Assume now that it is a $1$-discontinuity.
Then, define
\begin{align*}
\mathcal{F}_1(s)&=F(S_{U_-}(s),V)-F(U_-,V),\\
\mathcal{F}_2(s)&=\sigma_{U_-}(s)(\eta(S_{U_-}(s) \midd V)-\eta(U_- \midd V))+\int_0^s\sigma'_{U_-}(\tau)\eta(U_- \midd S_{U_-}(\tau))\,d\tau.
\end{align*}
We want to show that $ \mathcal{F}_1(s)= \mathcal{F}_2(s)$ for all $s$.
Since $S_{U_-}(0)=U_-$, the equality is true  for $s=0$.
Next we have
\begin{align*}
\mathcal{F}'_1(s)&=\frac{d}{ds}G(S_{U_-}(s))-\nabla \eta(V) \cdot \frac{d}{ds}A(S_{U_-}(s))\\[0.1 cm]
&= [ \nabla \eta(S_{U_-}(s))-\nabla \eta(V) ] \cdot \frac{d}{ds}[A(S_{U_-}(s))-A(U_L)],
\end{align*}
and 
\begin{align*}
\mathcal{F}'_2(s)&=\sigma_{U_-}'(s)[\nabla \eta(V) \cdot ((S_{U_-}(s)-V)-(U_--V))-\nabla \eta(S_{U_-}(s)) \cdot (S_{U_-}-U_-)]\\[0.1 cm]
&\qquad \qquad \qquad \qquad \qquad +\sigma_{U_-}(s)[\nabla \eta(S_{U_-}(s))-\nabla \eta(V)] \cdot S_{U_-}'(s)\\[0.2 cm]
&=[\nabla \eta(S_{U_-}(s))-\nabla \eta(V)] \cdot \frac{d}{ds}[\sigma_{U_-}(s) (S_{U_-}(s)-U_L)].
\end{align*}
Using the fact that $(U_-,S_{U_-}(s))$ with velocity $\sigma_{U_-}(s)$ verifies the Rankine-Hugoniot conditions, we get
$$
\mathcal{F}'_1(s)=\mathcal{F}'_2(s)\qquad \mathrm{for\ } s>0.
$$
\end{proof}

The next lemma is a variation on a crucial
lemma of DiPerna \cite{DiPerna}. It is an extension of a lemma from  \cite{LV}.
\begin{lemm}\label{cornerstone}
For any $U\in B$ and any $s>0$, $s_0>0$, we have
$$
F(S_U(s),S_U(s_0))-\sigma_U(s)\eta(S_U(s) \midd S_U(s_0))=\int_{s_0}^{s}\sigma_U'(\tau)(\eta(U|S_U(\tau))-\eta(U \midd S_{U}(s_0)))\,d\tau\leq0.
$$
Especially, there exists $\delta>0$ and $\kappa>0$ such that we have the following.
\begin{eqnarray*}
&& F(S_U(s),S_U(s_0))-\sigma_U(s)\eta(S_U(s) \midd S_U(s_0))\leq -\kappa |\sigma_U(s)-\sigma_U(s_0)|^2, \qquad \mathrm{for \ \ } |s-s_0|\leq \delta,\\
&& F(S_U(s),S_U(s_0))-\sigma_U(s)\eta(S_U(s) \midd S_U(s_0))\leq -\kappa |\sigma_U(s)-\sigma(s_0)|, \qquad \mathrm{for \ \ } |s-s_0|\geq \delta.
\end{eqnarray*} 
\end{lemm}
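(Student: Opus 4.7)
The plan is to derive the identity by double application of Lemma \ref{decreasing}, then read off the sign from the Liu condition and the shock-growth hypothesis, and finally upgrade nonpositivity to the quantitative bounds via a local Taylor expansion (for small $|s-s_0|$) and a monotonicity argument (for large $|s-s_0|$).

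First I would establish the identity. Apply Lemma \ref{decreasing} with $U_- = U$, $U_+ = S_U(s)$, velocity $\sigma_U(s)$, and test state $V = S_U(s_0)$; this gives
\begin{align*}
F(S_U(s),S_U(s_0)) - \sigma_U(s)\eta(S_U(s)\midd S_U(s_0))
&= F(U,S_U(s_0)) - \sigma_U(s)\eta(U\midd S_U(s_0)) \\
&\quad + \int_0^s \sigma'_U(\tau)\eta(U\midd S_U(\tau))\,d\tau.
\end{align*}
Then apply the same lemma with $s$ replaced by $s_0$, which forces $U_+ = V = S_U(s_0)$ and makes the left-hand side vanish, yielding
$$
F(U,S_U(s_0)) = \sigma_U(s_0)\eta(U\midd S_U(s_0)) - \int_0^{s_0}\sigma'_U(\tau)\eta(U\midd S_U(\tau))\,d\tau.
$$
Substituting and using $\sigma_U(s_0) - \sigma_U(s) = -\int_{s_0}^{s}\sigma'_U(\tau)\,d\tau$ to absorb the $\eta(U\midd S_U(s_0))$ term into an integral from $s_0$ to $s$, all remaining pieces collapse into $\int_{s_0}^{s}\sigma'_U(\tau)\bigl(\eta(U\midd S_U(\tau)) - \eta(U\midd S_U(s_0))\bigr)\,d\tau$.

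Nonpositivity is immediate from (H1): since $\sigma_U' < 0$ and $\tau \mapsto \eta(U\midd S_U(\tau))$ is strictly increasing, the integrand has the opposite sign to $\tau - s_0$, so the directed integral from $s_0$ to $s$ is nonpositive regardless of whether $s > s_0$ or $s < s_0$.

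For the quantitative estimates, set $H(\tau) := \sigma'_U(\tau)\bigl(\eta(U\midd S_U(\tau)) - \eta(U\midd S_U(s_0))\bigr)$. The $C^1$ regularity from (H1), combined with the strict inequalities in (H1)(a)--(b), yields $H(s_0)=0$ and $H'(s_0) = \sigma'_U(s_0)\,\tfrac{d}{ds}\eta(U\midd S_U(s))\bigl|_{s=s_0} < 0$. Hence there is $\delta>0$ on which $H(\tau) = H'(s_0)(\tau - s_0) + o(\tau - s_0)$, so $\int_{s_0}^{s}H(\tau)\,d\tau \le -c_0(s-s_0)^2$ for some $c_0>0$ whenever $|s-s_0|\leq\delta$; the continuity of $\sigma'_U$ gives $|\sigma_U(s)-\sigma_U(s_0)| \le C|s-s_0|$, producing the claimed bound of order $|\sigma_U(s)-\sigma_U(s_0)|^2$.

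For $|s-s_0|\geq\delta$, I treat $s \geq s_0+\delta$ (the other sign is symmetric). Split the integral at $s_0+\delta$; the part on $[s_0,s_0+\delta]$ is $\leq 0$ and can be dropped. On $[s_0+\delta,s]$, monotonicity of $\eta(U\midd S_U(\cdot))$ gives $\eta(U\midd S_U(\tau)) - \eta(U\midd S_U(s_0)) \geq c_\delta := \eta(U\midd S_U(s_0+\delta)) - \eta(U\midd S_U(s_0)) > 0$, so
$$
\int_{s_0+\delta}^{s} H(\tau)\,d\tau \leq c_\delta\bigl(\sigma_U(s) - \sigma_U(s_0+\delta)\bigr) = -c_\delta\bigl(|\sigma_U(s)-\sigma_U(s_0)| - D\bigr),
$$
with $D := \sigma_U(s_0)-\sigma_U(s_0+\delta) > 0$. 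On the range $|\sigma_U(s)-\sigma_U(s_0)| \ge 2D$ this is at most $-\tfrac{c_\delta}{2}|\sigma_U(s)-\sigma_U(s_0)|$; on the complementary compact range, the ratio of the (strictly negative) integral to $|\sigma_U(s)-\sigma_U(s_0)|$ is a continuous strictly negative function and is therefore bounded above by some negative constant. Taking $\kappa$ smaller than all constants involved concludes the bound.

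The main obstacle is the transition in the "far" case: the bound $c_\delta(|\sigma_U(s)-\sigma_U(s_0)|-D)$ naturally degenerates as $|\sigma_U(s)-\sigma_U(s_0)|\to D^+$, and one has to patch it with a compactness argument on an intermediate range before $|\sigma_U(s)-\sigma_U(s_0)|$ becomes large enough to dominate the loss $c_\delta D$. Choosing $\delta$ small first (to get the quadratic regime) and then $\kappa$ as the minimum of the two emerging constants is the delicate bookkeeping step.
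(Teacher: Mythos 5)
Your proposal is correct and follows essentially the same route as the paper: the identity by applying Lemma \ref{decreasing} twice with $V=S_U(s_0)$ and subtracting, the sign from (H1)(a)--(b), a local expansion of the integrand near $s=s_0$ for the quadratic bound, and monotonicity of $\eta(U\midd S_U(\cdot))$ on $|s-s_0|\geq\delta$ for the linear bound. Your extra care with the degenerate transition range (where $|\sigma_U(s)-\sigma_U(s_0)|$ is only slightly larger than $D$) is a point the paper's write-up glosses over, and your patch there is sound.
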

\begin{proof}

We use the estimate of Lemma \ref{decreasing} twice with $V=S_U(s_0)$ and $U_-=U$. The first time we take  $U_+=S_U(s)$, and the second time  $U_+=S_{U}(s_0)$.
The difference of the two results gives the first equality. Hypotheses $H1(a)$ and $H1(b)$ shows that the right hand side of the equality is nonpositive.  
The function $\sigma'_U$ and $\frac{d}{ds}\eta(U|S_U(s))$ are both continuous and non zero at $s=s_0$. Therefore there  exists  $0<\delta<s_0$ such that for $|s-s_0|\leq \delta$ we have both
\begin{eqnarray*}
&&|\sigma'_U(s)-\sigma'_U(s_0)|\leq |\sigma'(s_0)|/2,\\
&&\left|\frac{d}{ds}\eta(U|S_U(s))-\frac{d}{ds}\eta(U|S_U(s_0))\right|\leq \frac{1}{2}\left | \frac{d}{ds}\eta(U|S_U(s_0))\right|.
\end{eqnarray*}
And so, for $|s-s_0|\leq\delta$, we have 
\begin{eqnarray*}
&& F(S_U(s),S_U(s_0))-\sigma_U(s)\eta(S_U(s) \midd S_U(s_0))\leq -4\kappa_1|\sigma'_{U}(s_0)|^2 |s-s_0|^2\\
&&\qquad\qquad\qquad\qquad\leq - \kappa_1 |\sigma_U(s)-\sigma_U(s_0)|^2,
\end{eqnarray*}
with 
$$
\kappa_1=\frac{1}{32|\sigma'_U(s_0)|^2}|\sigma'_U(s_0)|\frac{d}{ds}\eta(U|S_U(s_0)).
$$
 Let us denote 
 $$
 \kappa_2=\inf(\eta(U|S_U(s_0))-\eta(U|S_U(s_0-\delta)); \eta(U|S_U(s_0+\delta))-\eta(U|S_U(s_0))). 
  $$
  Using that $\eta(U|S_U(s))$ is decreasing in $s$, and $\sigma'_U(s)$ is negative, we get for $s\leq s_0-\delta$
  $$
   F(S_U(s),S_U(s_0))-\sigma_U(s)\eta(S_U(s) \midd S_U(s_0))\leq -\kappa_2 \int_s^{s_0-\delta}\sigma_U'(\tau)\,d\tau=-\kappa_2|\sigma_U(s)-\sigma_U(s_0)|.
  $$ 
  in the same way we find for $s\geq s_0+\delta$
   $$
   F(S_U(s),S_U(s_0))-\sigma_U(s)\eta(S_U(s) \midd S_U(s_0))\leq -\kappa_2|\sigma_U(s)-\sigma_U(s_0)|.
  $$ 
Taking $\kappa=\inf(\kappa_1,\kappa_2)$ gives the result.
\end{proof}
The next result uses the decrease of entropy of the 1-shock family. We now consider a fixed shock $(U_L,U_R)$ with velocity $\sigma$. We denote $B(U,C)$ the ball centered at $U$ of radius $C$.
\begin{lemm}\label{lemm_def_C0}
There exist $C_0>0$,  $\beta>0$, and $v\in (\sigma,\lambda_-(U_L))$,  such that for any $U\in B(U_L,C_0)\subset B$:
\begin{eqnarray*}
&&v<\lambda_-(U),\\
&& -F(U, U_L)+v \eta(U\midd U_L)\leq -\beta \eta(U\midd U_L),\\
&& F(U, U_R)-v \eta(U\midd U_R)\leq -\beta \eta(U\midd U_R).
\end{eqnarray*}
\end{lemm}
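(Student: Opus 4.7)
The plan is to select $v$ in the open interval strictly between $\rho := F(U_L,U_R)/\eta(U_L \midd U_R)$ and $\lambda_-(U_L)$, then promote the strict inequalities valid at the single point $U=U_L$ to uniform ones on a small ball by continuity. The key preliminary step is to prove
$$
\sigma \;<\; \rho \;<\; \lambda_-(U_L).
$$
Applying Lemma \ref{decreasing} with $U_-=U_L$, $U_+=U_R$, $V=U_R$ yields the exact identity
$$
F(U_L,U_R) \;=\; \sigma\,\eta(U_L \midd U_R) \;+\; \int_0^s \bigl(-\sigma'_{U_L}(\tau)\bigr)\,\eta\bigl(U_L \midd S_{U_L}(\tau)\bigr)\,d\tau.
$$
By H1(a)--(b) the integrand is strictly positive on $(0,s]$, which gives $\rho>\sigma$. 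The strict monotonicity of $\tau\mapsto\eta(U_L \midd S_{U_L}(\tau))$ (still H1(b)) shows it is bounded by $\eta(U_L \midd U_R)$ on $[0,s)$; since $\int_0^s(-\sigma'_{U_L}(\tau))\,d\tau=\lambda_-(U_L)-\sigma$, the integral is strictly less than $(\lambda_-(U_L)-\sigma)\eta(U_L \midd U_R)$, giving $\rho<\lambda_-(U_L)$. I then fix any $v\in(\rho,\lambda_-(U_L))$, which automatically satisfies $v>\sigma$. Since $\lambda_-$ is $C^1$ on $\mV$ (the eigenvalue is simple and $\nabla A$ is $C^1$), continuity produces a radius $r_0>0$ with $v<\lambda_-(U)$ on $B(U_L,r_0)$.

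\textbf{First dissipation inequality.} Both $F(\cdot,U_L)$ and $\eta(\cdot \midd U_L)$ vanish at $U_L$ together with their first derivatives, and differentiating the compatibility relation (\ref{entropy flux}) shows that the Hessian of $U\mapsto F(U,U_L)$ at $U_L$ equals $D^2\eta(U_L)\,\nabla A(U_L)$, which is symmetric (either by Schwarz, or as a direct consequence of the Godunov structure). With respect to the inner product induced by $D^2\eta(U_L)$, the matrix $\nabla A(U_L)$ is self-adjoint, with eigenvalues $\geq\lambda_-(U_L)$, so its Rayleigh quotient is bounded below by $\lambda_-(U_L)$. Taylor expansion then gives
$$
F(U,U_L)-v\,\eta(U \midd U_L) \;\geq\; \bigl(\lambda_-(U_L)-v-o(1)\bigr)\,\eta(U \midd U_L) \qquad \text{as } U\to U_L,
$$
which produces the first inequality on a ball $B(U_L,r_1)$ with $\beta_1 = (\lambda_-(U_L)-v)/2$.

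\textbf{Second dissipation inequality and conclusion.} At $U=U_L$ the expression satisfies $F(U_L,U_R)-v\,\eta(U_L \midd U_R) = (\rho-v)\,\eta(U_L \midd U_R) < 0$ strictly, while $\eta(U_L \midd U_R)>0$ since $U_L\neq U_R$. Continuity of $U\mapsto F(U,U_R)$ and $U\mapsto\eta(U \midd U_R)$, together with Lemma \ref{L2} which keeps $\eta(U \midd U_R)$ bounded away from zero on a small neighborhood of $U_L$, yield $\beta_2>0$ and $r_2>0$ such that $F(U,U_R)-v\,\eta(U \midd U_R)\leq -\beta_2\,\eta(U \midd U_R)$ on $B(U_L,r_2)$. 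Setting $C_0=\min\bigl(r_0,r_1,r_2,\mathrm{dist}(U_L,\partial B)\bigr)$ and $\beta=\min(\beta_1,\beta_2)$ concludes the proof.

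\textbf{Main obstacle.} The delicate point is the strict upper bound $\rho<\lambda_-(U_L)$. Both the strict Liu condition H1(a) and the strict entropy-growth condition H1(b) are essential: if either degenerated, the interval $(\rho,\lambda_-(U_L))$ could collapse and no admissible $v$ would exist. It is precisely these strict inequalities built into the hypotheses that create the margin needed to place $v$ strictly between $\sigma$ and $\lambda_-(U_L)$ while retaining the two sign-definite dissipation estimates on either side of the shift.
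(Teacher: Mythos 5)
Your proposal is correct and follows essentially the same route as the paper: the strict inequality $F(U_L,U_R)<\lambda_-(U_L)\,\eta(U_L\midd U_R)$ obtained from the Lax-type identity of Lemma \ref{decreasing} (the paper packages this as Lemma \ref{cornerstone} with $s=0$), the choice of $v$ strictly between this threshold and $\lambda_-(U_L)$, the quadratic expansion of $-F(U,U_L)+v\eta(U\midd U_L)$ using the symmetry of $D^2\eta(U_L)\nabla A(U_L)$ for the first dissipation inequality, and continuity plus the non-degeneracy of $\eta(\cdot\midd U_R)$ near $U_L$ for the second. The only cosmetic difference is that you explicitly verify $\sigma<\rho$, whereas the paper simply takes $v$ close enough to $\lambda_-(U_L)$ so that $v>\sigma$ holds automatically.
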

\begin{proof}
We use Lemma \ref{cornerstone} with $U_R=S_{U_L}(s_0)$, and $s=0$. So $S_{U_L}(0)=U_L$ and (from Hypothesis  $H2$) $\sigma_{U_L}(0)=\lambda_-(U_L)$.  This gives
$$
F(U_L,U_R)-\lambda_-(U_L) \eta(U_L|U_R)<0.
$$
Since the inequality is strict, we can find $v$ with $\sigma<v<\lambda_-(U_L)$ such that we still have
$$
F(U_L,U_R)  -v \eta(U_L|U_R)<0,
$$
which can be written $-2\beta_1\eta(U_L|U_R)$, for $\beta_1$ small enough. Using the continuity of $F(\cdot,U_R)$, $\eta(\cdot|U_R)$, and $\lambda_{-}(\cdot)$ on $\mV$, there exists $C_{0,1}$ small enough such that 
$$
F(U,U_R)  -v \eta(U|U_R)<-\beta_1 \eta(U|U_R),\qquad \mathrm {and} \qquad v\leq \lambda_-(U)
$$
for  $U\in B(U_L,C_{0,1})$.
\vskip0.3cm
Doing an expansion at  $U= U_L$, we find
$$
 -F(U, U_L)+v \eta(U\midd U_L)=(U-U_L)^TD^2\eta(U_L)(v I-\nabla A(U_L))(U-U_L)+O(|U-U_L|^3).
$$ 
Since $\eta$ is a strictly convex entropy in $B$, $D^2\eta(U_L)$ is symmetric and strictly positive  and  the matrix $D^2\eta(U_L)(v I-\nabla A(U_L))$ is symmetric. Therefore those two matrices are  diagonalizable in the same basis. This gives
$$
D^2\eta(U_L)(v I-\nabla A(U_L))\leq (v-\lambda_-(U_L))D^2\eta(U_L),
$$ 
where $v-\lambda_-(U_L)=-2\beta_2<0$ thanks to Hypothesis (H2). Hence 
\begin{eqnarray*}
&&-F(U, U_L)+v\eta(U\midd U_L)\leq -2\beta_2(U-U_L)^TD^2\eta(U_L)(U-U_L)+O(|U-U_L|^3)\\
&&\qquad =-2\beta_2\eta(U\midd U_L)+O(|U-U_L|^3)\\
&&\qquad \leq -\beta_2\eta(U\midd U_L),\qquad \mathrm{for \ \ } U\in B(U_L,C_{0,2}),
\end{eqnarray*}
for $C_{0,2}$ small enough.
 Finally, taking $\beta=\inf(\beta_1,\beta_2)$, and $C_0=\inf(C_{0,1}, C_{0,2})$ gives the result.
\end{proof}
We are now ready to define $a$ which defines the metric of the contraction. Note that its definition does not depend on $\mU_K$ (and so, not on the weak solution $U(t,x)$).
We remind the reader that 
$$
\mathcal{O}_a=\{U\in \mU \setminus \eta(U|U_L)-a\eta(U|U_R)<0\}.
$$
\begin{prop}\label{lemm_a}
There exists $a^*>0$,and $0<\eps<1/2$ such that for any $0<a<a^*$, $\mathcal{O}_a\subset B(U_L,\eps C_0)$, and for every $U_-\in B(U_L,\eps C_0)$ and every $s\geq 0$ such that $\sigma_{U-}(s)\leq v$
$$
-F(U_-,U_L)+\sigma_{U_-}(s)\eta(U_-|U_L)+a\left(F(S_{U_-}(s)|U_R)-\sigma_{U_-}(s)\eta(S_{U_-}(s)|U_R)\right)\leq0.
$$
\end{prop}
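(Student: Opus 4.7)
The proposition has two parts, which I would treat in order.

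The inclusion $\mathcal{O}_a\subset B(U_L,\eps C_0)$ follows by proving that the ratio $\rho(U):=\eta(U\midd U_L)/\eta(U\midd U_R)$ is bounded below by a positive constant on $\mU\setminus B(U_L,\eps C_0)$. From the explicit formula
\[
\eta(U\midd U_L)-\eta(U\midd U_R)=[\nabla\eta(U_R)-\nabla\eta(U_L)]\cdot U+\bigl[\eta(U_R)-\eta(U_L)+\nabla\eta(U_L)\cdot U_L-\nabla\eta(U_R)\cdot U_R\bigr],
\]
the difference is affine in $U$, so $\rho(U)\to 1$ as $|U|\to\infty$ in $\mU$, while on a bounded region of $\mU\setminus B(U_L,\eps C_0)$ the continuity of $\eta(\cdot\midd U_{L,R})$ together with the strict positivity of $\eta(\cdot\midd U_L)$ away from $U_L$ yields a uniform positive lower bound. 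Fixing $0<\eps<1/2$ and taking $a^*$ to be any positive number strictly below this infimum establishes the first claim.

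For the dissipation inequality, denote its left-hand side by $E(U_-,s)$ and write $\sigma=\sigma_{U_-}(s)$. Applying Lemma \ref{decreasing} with $V=U_R$ to the $1$-shock $(U_-,S_{U_-}(s))$ gives
\[
F(S_{U_-}(s),U_R)-\sigma\,\eta(S_{U_-}(s)\midd U_R)=F(U_-,U_R)-\sigma\,\eta(U_-\midd U_R)+I(U_-,s),
\]
where $I(U_-,s)=\int_0^s\sigma'_{U_-}(\tau)\,\eta(U_-\midd S_{U_-}(\tau))\,d\tau\le 0$ by (H1)(a)--(b). Substituting this and invoking Lemma \ref{lemm_def_C0} (valid since $B(U_L,\eps C_0)\subset B(U_L,C_0)$) together with $\sigma\le v$ gives
\[
E(U_-,s)\le -\beta\,\eta(U_-\midd U_L)-a\beta\,\eta(U_-\midd U_R)+(v-\sigma)\bigl[a\,\eta(U_-\midd U_R)-\eta(U_-\midd U_L)\bigr]+a\,I(U_-,s).
\]
I then split cases on the size of $v-\sigma$. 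If $v-\sigma\le\beta/2$, the positive contribution $(v-\sigma)a\,\eta(U_-\midd U_R)\le a(\beta/2)\eta(U_-\midd U_R)$ is absorbed by $-a\beta\,\eta(U_-\midd U_R)$, and dropping the negative piece involving $\eta(U_-\midd U_L)$ gives $E\le 0$ for every $a>0$. If $v-\sigma>\beta/2$, the shock speed has dropped appreciably from $\sigma_{U_-}(0)=\lambda^-(U_-)>v$, so $\int_0^s(-\sigma'_{U_-})\,d\tau\ge v-\sigma$; combined with the monotonicity $\tau\mapsto\eta(U_-\midd S_{U_-}(\tau))$ from (H1)(b) and the $C^1$-dependence of the shock curve on $U_-$ from (H1), together with the anchoring identity $S_{U_L}(s^*)=U_R$, one can extract a threshold $\tau_0$ after which $\eta(U_-\midd S_{U_-}(\tau))\ge c\,\eta(U_-\midd U_R)$ for a constant $c>0$ independent of $(U_-,s)$. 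This yields $-I(U_-,s)\ge c\,(v-\sigma)\,\eta(U_-\midd U_R)$ up to a bounded additive correction, and, after possibly reducing $\eps$ and $a^*$, this negative contribution dominates the positive one and the desired inequality $E\le 0$ follows.

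\textbf{Expected main obstacle.} The delicate step is the second case: obtaining a quantitative lower bound on $|I(U_-,s)|$ of the form $c(v-\sigma)\eta(U_-\midd U_R)$ uniformly in $s\ge 0$ and in $U_-\in B(U_L,\eps C_0)$. For $U_-=U_L$ this is essentially a direct consequence of Lemma \ref{cornerstone} applied with $s_0=s^*$, since $U_R=S_{U_L}(s^*)$; the genuine work is transporting that identity to $U_-\ne U_L$ using the regularity of $(s,U)\mapsto(S_U(s),\sigma_U(s))$ on $B$. The parameter $\eps$ must therefore be chosen small enough that the shock curve emanating from $U_-$ remains close enough to $S_{U_L}(\cdot)$ for the curve to "pass near $U_R$" at a parameter value bounded in $s^*$.
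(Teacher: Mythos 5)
Your decomposition of the dissipation inequality cannot close in your second case, and the failure is already visible at the single most important point $(U_-,s)=(U_L,s_0)$, where $S_{U_L}(s_0)=U_R$ and $\sigma_{U_L}(s_0)=\sigma$. There the left-hand side of the proposition is exactly zero ($-F(U_L,U_L)+\sigma\eta(U_L\midd U_L)=0$ and $F(U_R,U_R)-\sigma\eta(U_R\midd U_R)=0$), so every inequality in any proof must degenerate to an equality at this point. Your chain does not: the proof of Lemma \ref{lemm_def_C0} defines $\beta_1$ by $F(U_L,U_R)-v\eta(U_L\midd U_R)=-2\beta_1\eta(U_L\midd U_R)$ and then takes $\beta\leq\beta_1$, while the exact identity $0=F(U_L,U_R)-\sigma\eta(U_L\midd U_R)+I(U_L,s_0)$ forces $-I(U_L,s_0)=(v-\sigma-2\beta_1)\,\eta(U_L\midd U_R)$. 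Since $v-\sigma>2\beta_1\geq 2\beta>\beta/2$, this point falls in your second case, where your displayed bound requires $-I\geq(v-\sigma-\beta)\eta(U_L\midd U_R)$, i.e. $\beta\geq2\beta_1$ --- contradicting $\beta\leq\beta_1$. The reason is that once you replace $F(U_-,U_R)-v\eta(U_-\midd U_R)$ by $-\beta\eta(U_-\midd U_R)$ you have thrown away the portion $(2\beta_1-\beta)\eta(U_L\midd U_R)>0$ of the dissipation, which is precisely what $-I$ was supposed to supply: your two bounds double-count the same negativity. No bound of the form $-I\geq c(v-\sigma)\eta(U_-\midd U_R)$ with $c\geq1$ can hold either, because $\eta(U_-\midd S_{U_-}(\tau))<\eta(U_-\midd U_R)$ on the entire range $\tau<s_0$ where $\sigma_{U_-}(\tau)\geq v$. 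Finally, even if you kept exact values at $U_L$, treating $F(U_-,U_R)-\sigma\eta(U_-\midd U_R)$ and $I(U_-,s)$ as separate pieces produces perturbation errors of order $|U_--U_L|$, which cannot be absorbed by the only remaining negative term $-\beta\eta(U_-\midd U_L)=O(|U_--U_L|^2)$.

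This is exactly why the paper never separates $I$ from $F(U_-,U_R)-\sigma\eta(U_-\midd U_R)$: it keeps the combination $F(S_{U_-}(s),U_R)-\sigma_{U_-}(s)\eta(S_{U_-}(s)\midd U_R)$ intact, relates it by an exact algebraic identity (Step 2 of the paper's proof) to $F(S_{U_-}(s),S_{U_-}(s_0))-\sigma_{U_-}(s)\eta(S_{U_-}(s)\midd S_{U_-}(s_0))$, and then invokes Lemma \ref{cornerstone}, which makes that quantity not merely nonpositive but quantitatively negative, $\leq-\kappa|\sigma_{U_-}(s)-\sigma_{U_-}(s_0)|^2$ for $|s-s_0|\leq\delta$ and $\leq-\kappa|\sigma_{U_-}(s)-\sigma_{U_-}(s_0)|$ otherwise. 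That extra negativity in $|\Delta\sigma|$ is what absorbs the first-order cross term $C|U_--U_L|\,|\Delta\sigma|$ of the perturbation, leaving only $C_\kappa|U_--U_L|^2$, which $-\beta\eta(U_-\midd U_L)$ absorbs after multiplying by $a$ small. Your sketch has no analogue of this $|\Delta\sigma|$-dissipation, and without it the argument cannot be repaired. Separately, your proof of $\Oa\subset B(U_L,\eps C_0)$ rests on the claim that $\eta(U\midd U_L)/\eta(U\midd U_R)\to1$ as $|U|\to\infty$; this is false in general (for a strictly convex entropy with linear growth the limit along a ray is a ratio of recession slopes, which need not equal $1$), and bounding the infimum of this ratio on an unbounded set is genuinely delicate. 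The paper sidesteps this entirely by noting that $\Oa$ is convex (its defining inequality bounds $\eta(U)$ by an affine function of $U$), contains $U_L$, and meets $B$ only inside a small ball, hence cannot escape $B$.
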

Note that the inequality holds for any $s>0$, that is, for any 1-shock with $U_L$ in $B(U_L,\eps C_0)$, whatever the strength of the shock, whenever the velocity of the shock is smaller than $v$ defined in Lemma \ref{lemm_def_C0}.

\begin{proof} We study, in a first part, the set $\mathcal{O}_a$. We show the inequality in a second part.
\vskip0.3cm \noindent{\bf Step 1: Study of $\mathcal{O}_a$. }
Note that for $a<1$
$$
\eta(U|U_L)-a\eta(U|U_R)<0
$$
is equivalent to 
\begin{equation}\label{eq_a}
\eta(U)\leq \frac{1}{1-a}\left( \eta(U_L)-a\eta(U_R)-\eta'(U_L)U_L+a\eta'(U_R)U_R+[\eta'(U_L)-a\eta'(U_R)]U \right).
\end{equation}
The right-hand side of the inequality is linear in $U$. The convexity of $\eta$ implies the convexity of $\mathcal{O}_a$. Moreover, 
(\ref{eq_a}) can be rewritten as 
\begin{eqnarray*}
&&\eta(U|U_L)\leq \frac{a}{1-a}\left( \eta(U_L)-\eta(U_R)-\eta'(U_L)U_L+\eta'(U_R)U_R+[\eta'(U_L)-\eta'(U_R)]U \right)\\
&&\qquad \leq Ca(1+|U|), \qquad \mathrm{for \ \ } 0<a<1/2.
\end{eqnarray*}
Using Lemma \ref{L2} with $\Omega=B$, we find that for any $U\in B\cap\mathcal{O}_a$:
$$
C_1|U-U_L|^2\leq Ca(1+|U|)\leq C^*a.
$$
So, for $a^*$ small enough, for any $a<a^*$, we have for any $U\in B\cap \mathcal{O}_a$
$$
|U-U_L|^2\leq C^*a\leq \frac{1}{4} (\mathrm{diam} \ B)^2.
$$
The set $\mathcal{O}_a$ is convex, and $B\cap\mathcal{O}_a $ is strictly including in $B$, so 
$$
\mathcal{O}_a=\mathcal{O}_a\cap B,
$$
and for any $\eps>0$, there exists $a>0$ small enough such that 
$$
\mathcal{O}_a\subset B(U_L, \eps C_0).
$$
\vskip0.3cm \noindent{\bf Step 2: Perturbation of Lemma \ref{cornerstone}. } 
In this part, we show that for any $U\in B$, $s\geq0$, and $s_0\geq0$, we have
\begin{equation}\label{lemme4+}
\begin{array}{l}
\ds{\qquad F(S_U(s),S_U(s_0))-\sigma_U(s)\eta(S_U(s)|S_U(s_0))}\\
\ds{\qquad -\left(F(S_U(s),U_R)-\sigma_U(s)\eta(S_U(s)|U_R)\right)}\\
\ds{=F(U_R,S_U(s_0))-\sigma_U(s)\eta(U_R|S_U(s_0))}\\
\ds{\qquad+\left[\eta'(U_R)-\eta'(S_U(s_0))\right]\left[A(U)-A(U_L)-\sigma_U(s)(U-U_L)+(\sigma-\sigma_U(s))(U_L-U_R)\right],}
\end{array}
\end{equation}
where $U_R=S_{U_L}(s_0)$, and $\sigma=\sigma_{U_L}(s_0)$.
\vskip0.3cm
This equality can be computed as follows. Using the definitions of $F$ and of the relative entropy, the left hand side of (\ref{lemme4+}) can be written as
\begin{eqnarray*}
&& \qquad G(U_R)-G(S_U(s_0))-\eta'(S_U(s_0))[A(S_U(s))-A(S_U(s_0))]+\eta'(U_R)[A(S_U(s))-A(U_R)]\\
&&\qquad -\sigma_U(s)[\eta(U_R)-\eta(s_U(s_0))]+\sigma_U(s)\eta'(S_U(s_0))[S_U(s)-S_U(s_0)]\\
&&\qquad\qquad\qquad-\sigma_U(s)\eta'(U_R)[S_U(s)-U_R]\\
&&=F(U_R,S_U(s_0))+\left[\eta'(U_R)-\eta'(S_U(s_0))\right]\left[A(S_U(s))-A(U_R)\right]\\
&&\qquad -\sigma_U(s)\eta(U_R|S_U(s_0))-\sigma_U(s)\left[\eta'(U_R)-\eta'(S_U(s_0))\right][S_U(s)-U_R]\\
&&=F(U_R,S_U(s_0))-\sigma_U(s)\eta(U_R|S_U(s_0))\\
&&\qquad  +\left[\eta'(U_R)-\eta'(S_U(s_0))\right] \left[ A(S_U(s))-A(U_R)-\sigma_U(s)(S_U(s)-U_R)\right].
\end{eqnarray*}
This gives (\ref{lemme4+}) thanks to the Rankine-Hugoniot conditions
\begin{eqnarray*}
&&A(U_R)-A(U_L)=\sigma (U_R-U_L)\\
&&A(S_U(s))-A(U)=\sigma_U(s) (S_U(s)-U). 
\end{eqnarray*}
\vskip0.3cm \noindent{\bf Step 3: Control of the right-hand side of (\ref{lemme4+}). } 
In this step, we show that the right-hand side of  (\ref{lemme4+}) can be bounded by
\begin{equation}\label{estimatelemme4+}
C|U-U_L|^2 (1+|\sigma_U(s)-\sigma_U(s_0)|)+C|U-U_L|\ |\sigma_U(s)-\sigma_U(s_0)| 
\end{equation}
uniformly with respect to $s>0$ and $U\in B$, for a fixed constant $C$ depending only on the shock $(U_L, U_R, \sigma)$,  the Lipschitz norms of $A, \eta, G$ on $B\cup \tilde{B}$, where $\tilde{B}$ is the image of $B$ through $S_\cdot(s_0)$, and the Lipschitz norms on $B$ of $U\to \sigma_U(s_0)$, and $U\to S_U(s_0)$.
\vskip0.3cm
First $U\to \sigma_U(s_0)$ is bounded in $B$.
Since $S_U(s_0)$ is bounded in $\tilde{B}$ for $U\in B$, we have
\begin{eqnarray*}
&&|F(U_R,S_U(s_0))|\leq C|U_R-S_U(s_0)|^2\\
&&|\sigma_U(s_0)\eta(U_R|S_U(s_0))|\leq C|U_R-S_U(s_0)|^2\\
&&\left|\eta'(U_R)-\eta'(S_U(s_0))\right| \left|A(U)-A(U_L)-\sigma_U(s_0)(U-U_L)+[\sigma-\sigma_U(s_0)](U_L-U_R)\right|\\
&&\qquad \leq C|U_R-S_U(s_0)|\ (|U-U_L|+|\sigma-\sigma_U(s_0)|).
\end{eqnarray*}
Since $U_R=S_{U_L}(s_0)$, and $U\to S_U(s_0)$ is Lipschitz on $B$
$$
|U_R-S_U(s_0)|\leq C |U-U_L|,\qquad |\sigma-\sigma_U(s_0)|\leq C |U-U_L|.
$$
Finally, writing 
$$
\sigma_U(s)=\sigma_U(s_0)+(\sigma_U(s)-\sigma_U(s_0)),
$$
and using again that $U\to \sigma_U(s_0)$ is bounded on $B$, we get (\ref{estimatelemme4+}).
\vskip0.3cm \noindent{\bf Step 4: Proof of the inequality of the lemma. }
Using(\ref{lemme4+}) and  (\ref{estimatelemme4+}), we find
\begin{eqnarray*}
F(S_U(s),U_R)-\sigma_U(s)\eta(S_U(s)|U_R)-[F(S_U(s),S_U(s_0))-\sigma_U(s)\eta(S_U(s)|S_U(s_0))]\\
\qquad \qquad \leq C|U-U_L|^2 (1+|\sigma_U(s)-\sigma_U(s_0)|)+C|U-U_L|\ |\sigma_U(s)-\sigma_U(s_0)|.
\end{eqnarray*}
Thanks to  Lemma \ref{cornerstone}, this gives for $|s-s_0|\leq \delta$
\begin{eqnarray*}
&&F(S_U(s),U_R)-\sigma_U(s)\eta(S_U(s)|U_R)\leq -\kappa  |\sigma_U(s)-\sigma_U(s_0)|^2\\
&&\qquad\qquad\qquad+ C|U-U_L|^2 (1+|\sigma_U(s)-\sigma_U(s_0)|)+C|U-U_L|\ |\sigma_U(s)-\sigma_U(s_0)|\\
&&\qquad \leq \tilde{C}_\kappa( |U-U_L|^2+|U-U_L|^4)\leq C_\kappa|U-U_L|^2\qquad \mathrm{for} \ \ U\in B.
\end{eqnarray*}
For $|s-s_0|\geq \delta$, Lemma \ref{cornerstone} gives
\begin{eqnarray*}
&&F(S_U(s),U_R)-\sigma_U(s)\eta(S_U(s)|U_R)\leq -\kappa  |\sigma_U(s)-\sigma_U(s_0)|\\
&&\qquad\qquad\qquad+ C|U_L-U|^2 (1+|\sigma_U(s)-\sigma_U(s_0)|)+C|U-U_L|\ |\sigma_U(s)-\sigma_U(s_0)|\\
&&\qquad \leq C_\kappa |U-U_L|^2,
\end{eqnarray*}
for $U\in B(U_L,\eps C_0)$ whenever $ C( \eps C_0+|\eps C_0|^2)\leq \kappa$, which is fulfilled for $\eps$ small enough. 
Take $a^*$ such that $C_\kappa a^*\leq \beta$ and $\mathcal{O}_a\in B(U_L,\eps C_0)$. Then, thanks to Lemma \ref{lemm_def_C0} and the fact that $\sigma_U(s)\leq v$, for any $U\in B(U_L,\eps C_0)$
$$
-F(U,U_L)+\sigma_{U}(s)\eta(U|U_L)+a\left(F(S_{U}(s)|U_R)-\sigma_{U}(s)\eta(S_{U}(s)|U_R)\right)\leq0.
$$
\end{proof}

\section{Construction of the drift}
Throughout this section, we assume that $(U_L,U_R)\in\mV^2$ is a fixed 1-discontinuity with velocity $\sigma$, and that $U$ is a fixed weak entropic solution of (\ref{system}) verifying the strong trace property of Definition \ref{defi_trace}. We assume that for almost every $(t,x)$, $U(t,x)\in \mU_K$, where $\mU_K$ verifies (\ref{1*}).
We fix, once for all,  $v$ and $C_0$ as in Lemma \ref{lemm_def_C0}, and   $a>0$, $\eps>0$ verifying Proposition \ref{lemm_a}.
First, we consider the function
$$
 V(U)=v -\frac{[-F(U,U_L)+v\eta(U\midd U_L)]_++a[F(U,U_R)-v\eta(U\midd U_R)]_+}{\eta(U \midd U_L)-a\eta(U \midd U_R)}, \qquad\mathrm{for} \ \  U\in \mV.
$$
The function $U\to V(U)$ is well defined on $\mV$ thanks to Lemma \ref{lemm_def_C0}. Indeed, the numerator is equal to 0 on $B(U_L,C_0)$ which strictly contains the set $\{U \setminus \eta(U|U_L)-a\eta(U|U_R)=0\}$ where the denominator vanishes.
Note that $U\to V(U)$ can be continuously extended  on  $\mU_K$ (since it verifies (\ref{1*})).

In this section, we construct the drift $t\to x(t)$ and study its properties. We build $x(t)$, following \cite{LV}  (see Filippov \cite{Filippov}).

For any Lipschitzian path  $t\to x(t)$ we define
\begin{eqnarray*}
\Vs(t)&=&\max\left\{V(U(t,x(t)-)), \thinspace V(U(t,x(t)+))\right\},\\[0.2 cm]
\Vi(t)&=&
\begin{cases}
\min\left\{V(U(t,x(t)-)), \thinspace V(U(t,x(t)+))\right\}.
\end{cases}
\end{eqnarray*}

This section is dedicated to the following proposition.
\begin{prop}\label{x(t)}
For any $(U_L,U_R)\in\mV^2$    1-discontinuity with velocity $\sigma$,  
and $U$ a weak entropic solution of (\ref{system}) verifying the strong trace property of Definition \ref{defi_trace}, there exists a Lipschitzian path $t\to x(t)$
such that  for almost every $t>0$
$$
\Vi(t)\leq x'(t)\leq \Vs(t).
$$ 
\end{prop}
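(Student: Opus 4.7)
The plan is to build $x(t)$ as the uniform limit of classical ODE solutions driven by mollifications of $U$, and then to use the strong trace property of Definition \ref{defi_trace} to show that the limit satisfies the claimed one-sided bounds in the sense of a differential inclusion.

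\textbf{Step 1 (regularization and uniform bounds).} Let $\rho_\eps$ be a standard mollifier in the $x$ variable and set $U^\eps(t,\cdot)=U(t,\cdot)*\rho_\eps$. Since $\mU_K$ is convex and $U(t,x)\in \mU_K$ a.e., the mollification $U^\eps$ also takes values in $\mU_K$. By (\ref{1*}), $A$, $\eta$, $G$ are continuous on $\mU_K$; together with Lemma \ref{lemm_def_C0} (which guarantees that the denominator defining $V$ vanishes only in the interior of the region $B(U_L,C_0)$ where the numerator also vanishes identically), this shows that $V$ extends continuously to all of $\mU_K$ and is bounded there by some constant $M$. In particular $V^\eps(t,x):=V(U^\eps(t,x))$ is continuous in $x$, measurable in $t$, and uniformly bounded by $M$.

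\textbf{Step 2 (approximate ODEs and compactness).} For each $\eps>0$, apply Peano's existence theorem to obtain a Carathéodory solution $x^\eps$ of $\dot x^\eps(t)=V^\eps(t,x^\eps(t))$, $x^\eps(0)=0$; the family $\{x^\eps\}$ is uniformly Lipschitz with constant $M$. By Arzelà--Ascoli some subsequence $x^{\eps_n}$ converges, uniformly on compact subsets of $[0,\infty)$, to a Lipschitz function $x(t)$ with $|x'(t)|\leq M$.

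\textbf{Step 3 (identification of the limit).} This is the main obstacle. Apply the strong trace property of Definition \ref{defi_trace} to the Lipschitz curve $x(t)$, obtaining the traces $U_\pm(t)$ and hence $\Vi(t),\Vs(t)$. I will argue by contradiction: suppose that on a set $E$ of positive measure one has $x'(t)\geq \Vs(t)+\delta$ for some $\delta>0$. Pick a Lebesgue density point $t_0$ of $E$, so that for $h>0$ small,
\begin{equation*}
x(t_0+h)-x(t_0)\geq (\Vs(t_0)+\tfrac{\delta}{2})h.
\end{equation*}
Since $x^{\eps_n}\to x$ uniformly, for $n$ large the curve $x^{\eps_n}$ lies strictly to the right of $x(t_0)$ on a subset of $(t_0,t_0+h)$ of measure at least $\tfrac{3h}{4}$. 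On this subset, for $\eps_n$ much smaller than $\delta h$, the mollified value $U^{\eps_n}(t,x^{\eps_n}(t))$ is an average of $U(t,\cdot)$ over a small neighborhood of $x^{\eps_n}(t)$ lying entirely to the right of $x(t_0)$; by Definition \ref{defi_trace} this average is close in $L^1_t$ to $U_+(t_0)$, so by continuity of $V$ on $\mU_K$, $V^{\eps_n}(t,x^{\eps_n}(t))$ tends to $V(U_+(t_0))\leq \Vs(t_0)$ in $L^1$ on this subset. Integrating $\dot x^{\eps_n}=V^{\eps_n}(\cdot,x^{\eps_n})$ over $(t_0,t_0+h)$ and passing to the limit yields $x(t_0+h)-x(t_0)\leq h\Vs(t_0)+o(h)$, contradicting the previous lower bound. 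The bound $x'(t)\geq \Vi(t)$ a.e.\ follows by the symmetric argument.

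The technical heart of the argument is Step 3: one has to control simultaneously the mollification scale $\eps_n$, the uniform convergence rate of $x^{\eps_n}\to x$, the separation $|x^{\eps_n}(t)-x(t_0)|$, and possible oscillations of $x^{\eps_n}$ about $x(t_0)$; reconciling these requires a careful covering argument based on the quantitative $L^1_t$ form of the strong trace estimate in Definition \ref{defi_trace}.
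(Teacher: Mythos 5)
Your Steps 1--2 match the paper's construction (the paper uses the one-sided average $v_n(t,x)=\int_0^1 V(U(t,x+\tfrac{y}{n}))\,dy$ rather than a mollification of $U$, but the ODE-plus-Arzel\`a--Ascoli compactness part is the same). The genuine gap is Step 3, which you yourself flag as unresolved, and it is not merely technical: your setup makes it harder than it needs to be, in two concrete ways. First, by mollifying $U$ and then applying $V$, the value $V(U^{\eps_n}(t,x^{\eps_n}(t)))$ is $V$ evaluated at something close to a convex combination $\theta U_-(t)+(1-\theta)U_+(t)$ whenever the averaging window straddles the curve; since $V$ is not quasiconcave, this can exceed $\max(V(U_-(t)),V(U_+(t)))=\Vs(t)$. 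You are therefore forced to prove that the window lies entirely on one side of the \emph{moving} curve $x(\cdot)$, which is exactly the oscillation problem you cannot control. Second, your contradiction argument localizes relative to the fixed point $x(t_0)$ and compares to the fixed value $U_+(t_0)$, but Definition \ref{defi_trace} only controls $U(t,x(t)+y)$ against $U_\pm(t)$, traces that are merely bounded measurable in $t$; being to the right of $x(t_0)$ says nothing about being to the right of $x(t)$, and $U_+(t)$ need not be close to $U_+(t_0)$. The claim that $x^{\eps_n}$ stays to the right of $x(t_0)$ on three quarters of $(t_0,t_0+h)$ is also unjustified.

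The paper's proof closes precisely this step by a different ordering and a pointwise-in-$t$ estimate: averaging $V(U)$ (not $U$) and using convexity of $[\cdot]_+$ gives
$$
[x_n'(t)-\Vs(t)]_+\;\leq\;\esssup_{z\in(-\eps_n,\eps_n)}\bigl[V(U(t,x(t)+z))-\Vs(t)\bigr]_+,
$$
and the right-hand side tends to $0$ for a.e.\ $t$ because, by the trace property, $U(t,x(t)+z)$ is close to $U_-(t)$ or to $U_+(t)$ and $[V(\cdot)-\Vs(t)]_+$ is small at \emph{either} trace --- no one-sidedness is needed. One then integrates in $t$, uses convexity of $[\cdot]_+$ and the distributional convergence $x_n'\to x'$ to get $\int_0^T[x'-\Vs]_+\,dt=0$, and symmetrically for $\Vi$. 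You should replace your mollification of $U$ by an average of $V(U)$ taken over a window of width $1/n$, and replace the density-point contradiction by this direct $\esssup$ bound followed by integration; as written, Step 3 does not go through.
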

\begin{proof}
Consider the function
$$
v_n(t,x)=\int_{0}^1V(U(t,x+\frac{y}{n}))\,dy.
$$
Note that, $v_n$ is bounded, measurable in $t$, and Lipschitz in $x$. We denote by $x_n$ the unique solution to 
\begin{equation*}
\left\{\begin{array}{l}
x_n'(t)=v_n(t,x_n(t)), \qquad t>0,\\[0.1cm]
x_n(0)=0,
\end{array}\right.
\end{equation*}
in the sense of Carath\'{e}odory.
Since $v_n$ is uniformly bounded, $x_n$ is uniformly Lipschitzian (in time) with respect to $n$. Hence, there exists a Lipschitzian path $t\to x(t)$ such that (up to a subsequence) $x_n$ converges to  $x$ in $C^0(0,T)$ for every $T>0$.
We construct $\Vs(t)$ and $\Vi(t)$ as above for this particular fixed path $t\to x(t)$. Let us show that for almost every $t>0$
\begin{align}
&\ds{\lim_{n\to\infty} \ [x'_n(t)-\Vs(t)]_+ =0,} \label{est1}\\[0.1cm]
&\ds{\lim_{n\to\infty} \ [\Vi(t) - x'_n(t) ]_+=0.} \label{est2}
\end{align}
Both limits can be proved the same way. Let us focus on the first one. We have
\begin{align*}
[x'_n(t)-\Vs(t)]_+ &= \left[\int_0^1V(U(t,x_n(t)+\frac{y}{n}))\,dy-\Vs(t)\right]_+\\[0.1 cm]
&=\left[\int_0^1[V(U(t,x_n(t)+\frac{y}{n}))-\Vs(t)]\,dy\right]_+\\[0.1 cm]
&\leq \int_0^1\left[V(U(t,x_n(t)+\frac{y}{n}))-\Vs(t)\right]_+\,dy\\[0.1 cm]
&\leq \ \esssup_{y \in (0,1)} \ \left[V(U(t,x_n(t)+\frac{y}{n}))-\Vs(t)\right]_+ \\[0.1 cm]
&\leq \esssup_{z \in (-\eps_n, \eps_n)} \ \left[V(U(t,x(t)+z))-\Vs(t)\right]_+,
\end{align*}
where, for a given $t>0$, $\eps_n \to 0$ is chosen so that $x_n(t)- x(t) \in (-\eps_n, \eps_n - \frac{1}{n})$. 
We claim that for almost every $t>0$, the last term above goes to zero as $n \to \infty$. Indeed, fix $t >0$ for which $U(t, x(t) + \cdot)$ has a left and right trace in the sense of Definition \ref{defi_trace}. That is,
\begin{align*}
\lim_{\eps \to 0} \left\{ \esssup_{y \in (0,\eps)} \ds \vert U(t, x(t) + y) - U_+(t) \vert  \right\} = \lim_{\eps \to 0} \left\{ \esssup_{y \in (0, \eps)} \ds \vert U(t, x(t) - y) - U_-(t) \vert  \right\}= 0,
\end{align*}
Since $V$ is continuous on $\mU_K$, we have that for all $r >0$ there exists $\delta > 0$ such that 
\begin{align*}
\vert U - U_\pm(t) \vert < \delta \quad \Rightarrow  \quad  [ V(U) - V(U_\pm(t)) ]_+ < r.
\end{align*}
Therefore,
\begin{align*}
\lim_{\eps \to 0} \left\{ \esssup_{y \in (0,\eps)} \ds \left[ V(U(t, x(t) \pm y)) - V(U_\pm(t)) \right]_+  \right\} = 0,
\end{align*}
and it follows easily that 
\begin{align*}
\lim_{\eps \to 0} \left\{ \esssup_{z \in (-\eps,\eps)} \ds \left[ \thinspace V(U(t, x(t) + z)) - \Vs(t) \thinspace \right]_+  \right\} = 0.
\end{align*}
This verifies the claim above and finishes the proof of (\ref{est1}). The proof of (\ref{est2}) is similar.

\vskip0.3cm
Finally, the sequence $x_n'$ converges to $x'$ in the sense of distributions. Also, the function $[\cdot]_+$  is convex. 
Therefore, integrating (\ref{est1}) and (\ref{est2}) on $[0,T]$ and passing to the limit, we obtain
$$
\int_0^T[\Vi(t)-x'(t)]_+\,dt=\int_0^T[x'(t)-\Vs(t)]_+\,dt=0.
$$
In particular, for almost every $t>0$ we have
$$
\Vi(t)\leq x'(t)\leq \Vs(t).
$$
\end{proof}

We end this section with an elegant formulation of the Rankine-Hugoniot condition and related entropy estimates, as originally presented by Dafermos in the $BV$ case. Those estimates remain true for solutions having the strong trace property (in fact, the strong trace property defined in \cite{Vasseur_trace} suffices). The proof can be found in \cite{LV}.
\begin{lemm}\label{Dafermos}
Consider $t\to x(t)$ a Lipschitzian path, and $U$ an entropic weak solution to (\ref{system}) verifying the strong trace property. Then, for almost every
$t>0$ we have
\begin{eqnarray*}
&&A(U(t,x(t)+))-A(U(t,x(t)-))=x'(t)(U(t,x(t)+)-U(t,x(t)-)),\\[0.1 cm]
&&G(U(t,x(t)+))-G(U(t,x(t)-))\leq x'(t)(\eta(U(t,x(t)+))-\eta(U(t,x(t)-))).
\end{eqnarray*}
Moreover, for almost every $t>0$ and $V\in\mV$
\begin{eqnarray*}
&&\frac{d}{dt}\int_{-\infty}^0\eta(U(t,y+x(t)) \midd V)\,dy\leq-F(U(t,x(t)-),V)+x'(t)\eta(U(t,x(t)-) \midd V),\\[0.2 cm]
&&\frac{d}{dt}\int_0^{\infty}\eta(U(t,y+x(t)) \midd V)\,dy\leq F(U(t,x(t)+),V)-x'(t)\eta(U(t,x(t)+) \midd V).
\end{eqnarray*}
\end{lemm}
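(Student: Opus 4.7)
The statement consists of two pairs of assertions: the Rankine--Hugoniot jump relations for $A$ and $G$ at the curve $t\mapsto x(t)$, and the two one-sided evolution inequalities for $\int\eta(U(t,\cdot)|V)$ on each side of the curve. Both are proved by the same mechanism: test the relevant distributional equation or inequality satisfied by $U$ against a family of smooth cutoffs that concentrate onto the curve from one side, and pass to the limit using the strong trace property of Definition~\ref{defi_trace}.

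For the jump relations, pick $\psi\in C_c^\infty((0,T))$ and a smooth family $\chi_n:\R\to[0,1]$ with $\chi_n\equiv 1$ on $(-\infty,-1/n)$ and $\chi_n\equiv 0$ on $[0,\infty)$, so that $-\chi_n'(x-x(t))\,dx$ is an approximate Dirac mass at $x(t)$ from the left. Testing the weak formulation of $\partial_t U+\partial_x A(U)=0$ against $\phi_n(t,x)=\psi(t)\chi_n(x-x(t))$ (localized in $x$ by a spatial cutoff if needed to handle non-decay of $U$) and expanding $\partial_t\phi_n=\psi'\chi_n-\psi x'\chi_n'$ and $\partial_x\phi_n=\psi\chi_n'$, the only $n$-dependent contribution is
\[
\int_0^T\psi(t)\int[A(U)-x'(t)U](x,t)\,\chi_n'(x-x(t))\,dx\,dt.
\]
By the strong left trace, this converges to $-\int\psi(t)[A(U(t,x(t)-))-x'(t)U(t,x(t)-)]\,dt$. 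The mirror construction based on $1-\chi_n$ extracts the analogous right-trace contribution. Combining both---and using that in between $U$ solves the equation in $\mathcal{D}'$, so the bulk terms cancel---yields the Rankine--Hugoniot identity along $t\mapsto x(t)$ for every $\psi$, hence a.e.\ in $t$. The same argument applied to $\partial_t\eta(U)+\partial_x G(U)\leq 0$, with $\psi\geq 0$ to preserve the sign, produces the corresponding entropy inequality.

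For the two relative entropy inequalities, start from Lemma~\ref{defi_F}, namely $\partial_t\eta(U|V)+\partial_x F(U,V)\leq 0$ in $\mathcal{D}'$. Test against $\alpha(t)\chi_n(x-x(t))$ with $\alpha\in C_c^\infty((0,T))$ and $\alpha\geq 0$: this gives
\[
\int_0^T\alpha'(t)\int\eta(U|V)\chi_n(x-x(t))\,dx\,dt+\int_0^T\alpha(t)\int[F(U,V)-x'(t)\eta(U|V)]\chi_n'(x-x(t))\,dx\,dt\geq 0.
\]
After the change of variables $y=x-x(t)$, the first term tends to $\int_0^T\alpha'(t)\int_{-\infty}^0\eta(U(t,y+x(t))|V)\,dy\,dt$, and by the strong left trace the second tends to $-\int_0^T\alpha(t)[F(U(t,x(t)-),V)-x'(t)\eta(U(t,x(t)-)|V)]\,dt$. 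Arbitrariness of $\alpha\geq 0$ yields the first relative entropy inequality in the distributional sense, which coincides with the a.e.\ pointwise statement because the right-hand side lies in $L^\infty_{loc}(dt)$ by continuity of $\eta$ and $F$ on $\mU_K$ together with the boundedness of $U$. The symmetric construction approaching the curve from the right produces the second inequality.

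The only delicate step is the limit of integrals of the form $\int\Phi(U)\,\chi_n'(x-x(t))\,dx$ where $\chi_n'$ concentrates as a signed Dirac on the Lipschitz curve. The strong trace property of Definition~\ref{defi_trace} is tailored precisely for this: its essential-supremum formulation dominates, after integration in $t$, the error $\int[\Phi(U)-\Phi(U(t,x(t)\pm))]\chi_n'(x-x(t))\,dx$, by continuity of $\Phi\in\{A,G,F,\eta\}$ on $\mU_K$. With only an $L^\infty$ bound on $U$ the identification of the limit with the trace value would not be available, which is exactly why the strong trace property is imposed as a standing hypothesis throughout the paper.
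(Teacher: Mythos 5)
The paper itself does not prove this lemma: it states that ``the proof can be found in \cite{LV}'', so there is no in-paper argument to compare against. Your proposal---testing the weak formulation of (\ref{system}) and the relative entropy inequality of Lemma \ref{defi_F} against one-sided cutoffs $\chi_n(x-x(t))$ and identifying the concentrating boundary terms through the strong trace property of Definition \ref{defi_trace}---is exactly the mechanism of the cited proof and is correct as written; the only delicate points (spatial localization because $U$ is merely bounded, the chain rule along the Lipschitz path, and passing from the distributional to the a.e.\ pointwise form of the differential inequalities) are ones you either flag explicitly or handle correctly.
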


\section{Proof of Theorem \ref{main}}

This section is dedicated to the proof of our main result, Theorem \ref{main}. 

Consider $U$ weak entropic solution of (\ref{system}) with values in $\mU_K$ on $(0,T)$ verifying the strong trace property of Definition \ref{defi_trace}.
 Consider the path $t\to x(t)$ constructed in Proposition \ref{x(t)}. 
Let $a$ be such that $a<a^*$ defined in Proposition \ref{lemm_a}. We define 
$$
E_a(t)=\int_{-\infty}^{x(t)}\eta(U(t,x)|U_L)\,dx+a\int_{x(t)}^{\infty}\eta(U(t,x)|U_R)\,dx.
$$
For almost every time $t>0$, we have, from Lemma \ref{Dafermos}
\begin{eqnarray*}
&&\frac{dE_a(t)}{dt}\leq-F(U(t,x(t)-),U_L)+x'(t)\eta(U(t,x(t)-) \midd U_L)\\
&&\qquad\qquad\qquad+a\left( F(U(t,x(t)+),U_R)-x'(t)\eta(U(t,x(t)+) \midd U_R) \right).
\end{eqnarray*}
We want to show that this quantity is nonpositive for almost every time $t$. 

\vskip0.3cm 
The first result of Lemma  \ref{Dafermos} ensures that,  for almost every time $t>0$, $(U(t,x(t)-),U(t,x(t)+))$ is an admissible discontinuity with velocity $x'(t)$. So, thanks to Lemma \ref{decreasing},  for both $U_{\pm}=U(t,x(t)-)$ or $U(t,x(t)+)$ we have 
\begin{equation}\label{+-}
\frac{dE_a(t)}{dt}\leq-F(U_\pm),U_L)+x'(t)\eta(U_\pm \midd U_L)+a\left( F(U_\pm),U_R)-x'(t)\eta(U_\pm \midd U_R) \right).
\end{equation}
We denote $U_*\in \{U_-,U_+\}$ such that 
$$
V(U_*)=\max(V(U_-), V(U_+)).
$$
From Proposition \ref{x(t)} and the definition of $V$
\begin{equation}\label{2*}
x'(t)\leq V(U_*)\leq v.
\end{equation}

We consider different cases, whether $U_-=U(t,x(t)-)$ and $U_+=U(t,x(t)+)$ verify both $U_+\in \mathcal{O}^c_a$ and  $U_-\in \mathcal{O}^c_a$, or not. ($\mathcal{O}^c_a$ is the complement of $\mathcal{O}_a$ in  $ \mU_K$.)

\vskip0.3cm\noindent{\bf Step 1.} If $U_+\in \mathcal{O}^c_a$ and  $U_-\in \mathcal{O}^c_a$.
 By virtue of (\ref{+-}) we find 
\begin{eqnarray*}
&&
\frac{dE_a(t)}{dt}\leq-F(U_*,U_L)+x'(t)\eta(U_* \midd U_L)+a\left( F(U_*),U_R)-x'(t)\eta(U_* \midd U_R) \right)\\
&&\qquad\leq -F(U_*,U_L)+a F(U_*,U_R)+x'(t)[\eta(U_* \midd U_L)-a\eta(U_* \midd U_R)].
\end{eqnarray*}
Using that $\eta(U_*|U_L)-a\eta(U_*|U_R)\geq 0$ (since $U_*\in \mathcal{O}_a^c$), and  (\ref{2*}) we get: 
\begin{eqnarray*}
\label{EV}&&
\frac{dE_a(t)}{dt}\leq -F(U_*,U_L)+a F(U_*,U_R)+V(U_*)[\eta(U_* \midd U_L)-a\eta(U_* \midd U_R)].   
\end{eqnarray*}
Thanks to the definition  of $V$, we get 
\begin{eqnarray*}
&&
\frac{dE_a(t)}{dt}\leq -F(U_*,U_L)+a F(U_*,U_R)+V(U_*)[\eta(U_* \midd U_L)-a\eta(U_* \midd U_R)]\\
&& \qquad  \leq -F(U_*,U_L)+a F(U_*,U_R)+v[\eta(U_* \midd U_L)-a\eta(U_* \midd U_R)]\\
&&\qquad\qquad\qquad -[ -F(U_*,U_L)+v\eta(U_* \midd U_L)]_+-a [F(U_*,U_R)-v\eta(U_* \midd U_R)]_+\\
&&\qquad \leq0.
\end{eqnarray*}

\vskip0.3cm\noindent{\bf Step 2.} Assume that $U_-=U_+\in \mathcal{O}_a$.  From Proposition \ref{x(t)} we  have $x'(t)=V(U_-)=V(U_+)$.
The definition of $V$ gives that 
\begin{eqnarray*}
&&
\frac{dE_a(t)}{dt}\leq -F(U_-,U_L)+a F(U_+,U_R)+V(U_-)[\eta(U_- \midd U_L)-a\eta(U_+ \midd U_R)]\\
&& \qquad  = -F(U_-,U_L)+a F(U_-,U_R)+v[\eta(U_- \midd U_L)-a\eta(U_- \midd U_R)]\\
&&\qquad\qquad\qquad -[ -F(U_-,U_L)+v\eta(U_- \midd U_L)]_+-a [F(U_-,U_R)-v\eta(U_-\midd U_R)]_+\\
&&\qquad \leq0.
\end{eqnarray*}


\vskip0.3cm\noindent{\bf Step 3.} For the last case, we  assume that at least one of the two values $U_-$ and $U_+$ lies in $\mathcal{O}_a$, and those two values are distinct. By virtue of Lemma \ref{Dafermos}, $(U_-,U_+,x'(t))$ is a Rankine-Hugoniot discontinuity. 
We  first show that, indeed,  $U_-\in \mathcal{O}_a$ and  $(U_-,U_+,x'(t))$ is a 1-shock.
\vskip0.3cm Assume that $U_+\in \mathcal{O}_a$. Then, thanks to the Hypothesis (H2) and Lemma \ref{lemm_def_C0}, $x'(t)\geq \lambda_-(U_+)>v$. 
This provides a contradiction with (\ref{2*}).
Hence, we have $U_-\in \mathcal{O}_a$. But by virtue of the definition of $V$ and Lemma  \ref{lemm_def_C0}, $x'(t)\leq v <\lambda_{-}(U_-)$. Thanks to Hypothesis (H3), this ensures that   $(U_-,U_+,x'(t))$ is a 1-shock. Proposition  \ref{lemm_a} and (\ref{2*}) ensure that we still have in this case:
\begin{equation}\label{result}
\frac{dE_a(t)}{dt}\leq 0.
\end{equation}

 So, we have shown  that (\ref{result}) holds true  for almost every $t>0$.

\vskip0.3cm Now that we have shown the contraction, we have to show the estimates on $x(t)$.
We still denote  $S(x)$ the function equal to $U_L$ for $x<0$ and $U_R$ for $x>0$.
 Let $M = \| x'(t) \|_{L^\infty}$. Then for $T>0$, we consider an even cutoff function $\phi\in C^\infty(\R)$ such that 
\begin{align*}
\begin{cases}
\phi(x)=1, &\text{if $|x|\leq MT$,}\\[0.1 cm]
\phi(x)=0, &\text{if $|x|\geq 2MT$,}\\[0.1 cm]
\phi '(x)\leq 0, &\text{if $x\geq0$,}\\[0.1 cm]
|\phi'(x)|\leq 2(MT)^{-1}, &\text{if $x\in \R$.}
\end{cases}
\end{align*}
Then, for almost every $0<t< T$, we have
\begin{align*}
0&=\int_0^t\int_{\R}\phi(x)\left[\dt U+\dx A(U)\right]\,dx\,dt\\
&=\int_{\R}\phi(x)\left[\sU(x-x(t))-\sU(x)\right]\,dx-\int_0^t\int_{\R}A(\sU(x-x(s)))\phi'(x)\,dx\,ds\\
& \qquad \qquad +\int_\R\phi(x)\left[U(t,x)-\sU(x-x(t))\right]\,dx+\int_\R\phi(x)\left[\sU(x)-U^0(x)\right]\,dx\\
& \qquad \qquad \qquad \qquad -\int_0^t\int_{\R}\left[A(U(s,x))-A(\sU(x-x(s)))\right]\phi'(x)\,dx\,ds.
\end{align*}
The terms on the second line above  reduce to
$$
x(t)(U_L-U_R)-t(A(U_L)-A(U_R))=(x(t)-\sigma t) (U_L-U_R).
$$ 
The third line can be controlled by
$$
\|\phi\|_{L^2(\R)}\left(\|U(t,\cdot)-\sU(\cdot-x(t))\|_{L^2(\R)}+\|U^0-\sU\|_{L^2(\R)}\right)\leq C_K\sqrt{MT}\|U^0-\sU\|_{L^2(\R)}.
$$
Finally, since $A$ has a suitable Lipschitz property at the points $U_L, U_R \in \mV$, and is bounded on $\mU_K$,  the last term has the following bound:
\begin{multline*}
\left|\int_0^t\int_\R \left[A(U(s,x))-A(\sU(x-x(s)))\right]\phi'(x)\,dx\,ds\right|\\
\qquad \ \ \leq C_K \|\phi'\|_{L^\infty(\R)}\int_0^t\int_{-2MT}^{2MT}\left|U(s,x)-\sU(x-x(s))\right|\,dx\,ds\\
\leq \frac{2C_K}{MT} \int_0^t \sqrt{4MT} \thinspace \| U(s,\cdot)-\sU(\cdot-x(s)) \|_{L^2(\R)} \,ds \leq C_K\frac{\sqrt{T}}{\sqrt{M}}\|U^0-\sU\|_{L^2(\R)}.
\end{multline*}
Combining the estimates above we obtain for $t\leq T$
$$
|x(t)-\sigma t|\leq \frac{C_K\sqrt{T}(\sqrt{M}+1/\sqrt{M})\|U^0-\sU\|_{L^2(\R)}}{|U_L-U_R|}\leq \bar{C}_K\sqrt{T}\|U^0-\sU\|_{L^2(\R)}.
$$
This concludes the proof of the theorem.
We emphasize that while the contraction does not depend on $K$ (the $L^\infty$ size of the function $U$), the control of $x(t)$ depends on it.

\section{Proof of Proposition \ref{prop_xt}}\label{scalar}

For $r>0$, we consider the initial value
\begin{eqnarray*}
&&u^0(x)=1+\frac{\sqrt{2r\eps}}{(1-x)^{1/2+r}}\qquad \mathrm{for} \  \ x<0,\\
&&u^0(x)=-1\qquad \mathrm{for} \  \ x>0.
\end{eqnarray*}
We have 
$$
\|u^0-S\|^2_{L^2}=\eps.
$$
Note that $u^0$ is increasing for $x<0$. So, for $t>0$, $u(t,\cdot)$ is increasing on $(-\infty, x(t))$ and equal to $-1$ for $x>x(t)$.
The Rankine Hugoniot condition gives that 
$$
x'(t)=u(t,x(t)-)-1>0.
$$
The value $u(t,x(t)-)$ can be obtained by the method of characteristics:
\begin{eqnarray*}
&& u(t,x(t)-)=u^0(-y(t))\\
&&x(t)+y(t)=2tu^0(-y(t)).
\end{eqnarray*}
Note that  for any $x$ we have $u^0(x)<2$ (at least for $\eps$ small enough), so $y(t)\leq 4t$. Since $u^0$ is increasing for $x<0$, 
$$
u(t,x(t)-)=u^0(-y(t))\geq 1+\frac{\sqrt{2r\eps}}{(1+4t)^{1/2+r}}.
$$ 
Hence, 
$$
x'(t)\geq \frac{\sqrt{2r\eps}}{(1+4t)^{1/2+r}}.
$$
Integrating in time we find for $t\geq 1$
$$
x(t)\geq \frac{\sqrt{2r\eps} (1+4t)^{1/2-r}}{2(1-2r)}\geq C_r\|u^0-S\|_{L^2}t^{1/2-r}.
$$
From Leger \cite{Leger}, there exists a Lipschitz drift $t\to y(t)$ such that 
$$
\int_\R|u(t,x)-S(x-y(t))|^2\,dx
$$
is not increasing in time.  Any such $t\to y(t)$ verifies
\begin{eqnarray*}
&&
|x(t)-y(t)|=\frac{1}{2}\int_\R|S(x-x(t))-S(x-y(t))|^2\,dx\\
&&\qquad\qquad\leq  \int_\R|u(t,x)-S(x-y(t))|^2\,dx+\int_\R|u(t,x)-S(x-x(t))|^2\,dx\leq 2\eps.
\end{eqnarray*}
This ends the proof.

\noindent {\bf Acknowledgment:} The author was  partially supported by the NSF while completing this work.

\bibliography{LV.bib}

\end{document}